%% file: main.tex
\documentclass[11pt,a4paper]{article}

\usepackage[utf8]{inputenc}
\usepackage{amsmath, amsthm, amssymb, mathtools}
\usepackage{geometry}
\usepackage{graphicx}
\usepackage{float}
\usepackage[dvipsnames]{xcolor}
\definecolor{navy}{RGB}{27, 42, 74}
\usepackage{tikz}
\usetikzlibrary{matrix}
\usepackage{tikz-3dplot}

\usepackage{booktabs}
\usepackage{multirow}

\usepackage[numbers,sort&compress]{natbib}
\usepackage{doi}
\usepackage{hyperref}
\hypersetup{colorlinks=true, linkcolor=blue, citecolor=blue, urlcolor=blue}

\newtheorem{theorem}{Theorem}[section]
\newtheorem{lemma}[theorem]{Lemma}

\newtheorem{corollary}[theorem]{Corollary}
\newtheorem{remark}[theorem]{Remark}
\newtheorem{definition}[theorem]{Definition}

\DeclareMathOperator{\GL}{GL}
\DeclareMathOperator{\Hom}{Hom}
\DeclareMathOperator{\tr}{tr}

\DeclareMathOperator{\sgn}{sgn}

\title{\LARGE \textbf{Parallelisation of Discrete Exterior Calculus via Representation Theory on Curved and Three-Dimensional Meshes}}

\author{\large \textbf{Leon D. da Silva}, \textbf{Marcelo P. Santos}, \textbf{Jos\'{e} D. da Silva}, \textbf{Gilson Ferreira Jr.} \\
\small Departamento de Matem\'{a}tica, Universidade Federal Rural de Pernambuco \\
\small Rua Dom Manoel de Medeiros, s/n, Recife, PE, 52171-900, Brazil \\
\small Corresponding author: \texttt{leon.silva@ufrpe.br}}

\date{\small \today}

\begin{document}

\maketitle

\begin{center}
\resizebox{\linewidth}{!}{%
    \begin{tikzpicture}[font=\sffamily, line cap=round, line join=round, >=stealth, thick]

      \node[font=\sffamily\bfseries, navy] at (0.2,3.25) {1.\ Symmetric Mesh};
      \node[font=\sffamily\small, gray!60] at (0.2,2.65) {Topology \& group $G$};

      \begin{scope}[shift={(-0.55,0.15)}, scale=0.72]
        \shade[ball color=gray!14, opacity=0.45] (0,0) circle[radius=1.92];
        \tdplotsetmaincoords{70}{110}
        \begin{scope}[tdplot_main_coords]
          \input{figures/icosahedron_mesh.tex}
        \end{scope}
      \end{scope}
      \node[font=\sffamily\scriptsize, gray!70] at (-0.55,-1.85) {$S^2$\,:\ $I_h$};

      \tdplotsetmaincoords{70}{120}
      \begin{scope}[shift={(1.80,-0.1)}, scale=0.64, tdplot_main_coords]
        \coordinate (P0) at (-1,-1,-1);
        \coordinate (P1) at ( 1,-1,-1);
        \coordinate (P2) at ( 1, 1,-1);
        \coordinate (P3) at (-1, 1,-1);
        \coordinate (P4) at (-1,-1, 1);
        \coordinate (P5) at ( 1,-1, 1);
        \coordinate (P6) at ( 1, 1, 1);
        \coordinate (P7) at (-1, 1, 1);
        \coordinate (Oc) at ( 0, 0, 0);
        \fill[navy!7] (P4)--(P5)--(P6)--(P7)--cycle;
        \draw[gray!35, line width=0.5pt] (P0)--(P1) (P0)--(P3) (P0)--(P4);
        \draw[navy!20, line width=0.35pt]
            (Oc)--(P0) (Oc)--(P1) (Oc)--(P2) (Oc)--(P3)
            (Oc)--(P4) (Oc)--(P5) (Oc)--(P6) (Oc)--(P7);
        \draw[gray!75, line width=0.7pt]
            (P1)--(P2)--(P3) (P4)--(P5)--(P6)--(P7)--cycle
            (P1)--(P5) (P2)--(P6) (P3)--(P7);
        \draw[gray!55, line width=0.5pt, densely dashed] (P4)--(P6);
        \fill[navy] (Oc) circle[radius=1.7pt];
      \end{scope}
      \node[font=\sffamily\scriptsize, gray!70] at (1.80,-1.85) {$T^3$\,:\ $T_d$};

      \draw[->, line width=1.3pt, gray!55] (3.40,0) -- (4.35,0);

      \node[font=\sffamily\bfseries, navy] at (5.95,3.25) {2.\ Universal Equivariance};
      \node[font=\normalsize] at (5.95,2.45)
            {$[\rho(g),d]=0 \quad\&\quad [\rho(g),\star]=0$};
      \node[font=\sffamily\small, gray!60] at (5.95,1.75)
            {single symmetry-adapted basis $Q$};
      \node[font=\sffamily\small, gray!60] at (5.95,1.30)
            {(built once, PDE-agnostic)};
      \node[font=\normalsize] at (5.95,0.0)
            {$L \mapsto Q^{\mathsf{T}} L Q$};

      \draw[->, line width=1.3pt, gray!55] (7.55,0) -- (8.50,0);

      \node[font=\sffamily\bfseries, navy] at (13.05,3.25) {3.\ Decoupled Systems};
      \node[font=\sffamily\small, gray!60] at (13.05,2.65) {block-diagonal, solved in parallel};

      \node[anchor=west, font=\sffamily\small] at (9.55,1.0)  {Poisson $(\Delta)$};
      \node[anchor=west, font=\sffamily\small] at (9.55,0.3)  {Maxwell $(d,\delta)$};
      \node[anchor=west, font=\sffamily\small] at (9.55,-0.4) {Navier--Stokes$^{\ast}$};
      \draw[gray!60, line width=0.8pt]
            (9.45,1.25) -- (9.32,1.25) -- (9.32,-0.65) -- (9.45,-0.65);
      \draw[->, line width=1.0pt, gray!60] (12.05,0.30) -- (12.85,0.30);

      \begin{scope}[shift={(13.05,1.55)}, scale=0.80]
        \fill[gray!4] (0,0) rectangle (3.6,-3.6);
        \fill[navy!58] (0.0, 0.0) rectangle (1.1,-1.1);
        \node[white, font=\bfseries\small] at (0.55,-0.55) {$L_1$};
        \fill[navy!72] (1.1,-1.1) rectangle (2.5,-2.5);
        \node[white, font=\bfseries\small] at (1.8,-1.8) {$L_2$};
        \node[gray!75, font=\large] at (2.83,-2.83) {$\ddots$};
        \fill[navy!85] (3.0,-3.0) rectangle (3.6,-3.6);
        \node[white, font=\bfseries\scriptsize] at (3.3,-3.3) {$L_n$};
        \draw[gray!70, line width=0.9pt] (0,0) rectangle (3.6,-3.6);
      \end{scope}

      \node[anchor=west, font=\sffamily\scriptsize, gray!70] at (9.55,-1.55)
            {$^{\ast}$linear substeps (operator splitting)};

    \end{tikzpicture}
    }
\end{center}

\noindent\textbf{Graphical Abstract.} A single symmetry-adapted orthogonal basis $Q$, built once from the mesh symmetry group $G$, block-diagonalizes every DEC operator $L$ into independently solvable isotypic blocks via $L \mapsto Q^{\mathsf{T}} L\, Q$.

\medskip

\begin{abstract}
\noindent We establish a universal block-diagonalization framework for Discrete Exterior Calculus (DEC) operators on symmetric meshes, enabling embarrassingly parallel solvers with provable FLOP reductions. We prove that the two fundamental DEC operators, the discrete exterior derivative $d$ and the Hodge star $\star$, are equivariant under isometric finite group actions on simplicial complexes. The proof exploits the permutation representation induced on cochain spaces by the group action. As a consequence, any operator assembled from $d$ and $\star$ (including the Hodge Laplacian, the codifferential, Maxwell-type operators, and elasticity operators) inherits a block-diagonal structure in a single symmetry-adapted basis, which is computed only once per mesh. Unlike spectral methods restricted to flat Platonic domains, the framework applies natively to curved manifolds and is applicable in principle to computational electromagnetism and geometric fluid simulation on symmetric domains. Numerical experiments on a geodesic sphere ($I_h$ symmetry) and a hexagonal torus ($D_{6h}$ symmetry) yield FLOP-based parallel speedups, relative to a dense direct factorization, of up to $62\times$ and $182\times$, respectively. A further experiment on a body-centred-cubic (BCC) tessellation of the flat 3-torus $T^3$ with $T_d$ symmetry confirms equivariance of the exterior derivative, Hodge star, and Hodge Laplacian at machine precision for form degrees $k=0,1,2$ across three mesh resolutions. The FLOP-based sequential speedup approaches its theoretical asymptote of $\approx 9.07\times$, which a standard Schur-multiplicity reduction deepens by a further factor of order $|G|$. These results show that a single symmetry-adapted basis reduces the linear-solve cost of structure-preserving DEC computations on curved and three-dimensional meshes.

\vspace{1em}
\noindent \textbf{Keywords:} Discrete Exterior Calculus, Group Representation Theory, Equivariance, Symmetry-Adapted Basis, Block-Diagonalization, Hodge Laplacian, Curved Manifolds, Three-Dimensional Meshes.

\vspace{0.5em}
\noindent \textbf{MSC 2010:} 65N30 (primary); 65N22, 65Y05, 65Y20, 68W10, 20C15, 20C35, 58A14, 58A12, 05E18 (secondary).
\end{abstract}

\section{Introduction}

Discrete Exterior Calculus (DEC) \cite{hirani2003,desbrun2005} is a structure-preserving framework for the discretization of partial differential equations on manifolds. DEC represents physical fields as discrete differential forms (cochains) defined on simplicial complexes and their circumcentric duals. This construction guarantees that the generalized Stokes' theorem, the de~Rham complex structure, and physical conservation laws hold exactly at the discrete level, avoiding non-physical artifacts common to conventional discretizations. The framework is applied in computational electromagnetism \cite{bossavit1998} and geometry processing \cite{crane2013}. DEC also extends naturally to triangulated curved surfaces. Fluid solvers in this framework originate in a circulation-preserving simplicial scheme~\cite{elcott2007}. The incompressible Navier--Stokes equations have since been discretized on planar and spherical surfaces~\cite{mohamed2016,jagad2021}, and a covariant formulation treats genus-zero flows together with harmonic vector fields on the torus~\cite{nitschke2017}. Numerical convergence is established on arbitrary, including non-Delaunay, surface meshes~\cite{mohamed2018}.

Despite these structural guarantees, achieving high-fidelity numerical resolution in DEC requires inverting large, sparse matrix systems---most notably those arising from the discrete Hodge Laplacian $\Delta = \delta d + d\delta$. Even scalable iterative solvers such as multigrid can require problem-specific tuning here, owing to the kernels induced by nontrivial cohomology and to the anisotropy of meshes over curved manifolds.

A well-established paradigm for reducing these linear solve costs is \emph{symmetry exploitation}. When a spatial discretization respects the symmetries of a finite group $G$, classical representation theory guarantees a block-diagonalization of the system matrix via a unitary change to a \emph{symmetry-adapted basis}, decoupling the problem into independent subsystems of reduced dimension. This algebraic reduction has historical roots in computational chemistry and molecular physics \cite{allgower1997,fassler1992}. Bossavit formalised the construction for boundary-value problems, reducing a $G$-invariant problem to one subproblem per irreducible representation posed on the symmetry cell~\cite{bossavit1986}. The same reduction admits a purely algebraic statement, in which the generalized Fourier transform block-diagonalizes equivariant matrices~\cite{ahlander2005}.

Representation theory is now used to exploit symmetries across a wide range of computational methodologies. Olver~\cite{olver2025} recently demonstrated how global polynomial spectral bases on flat Platonic domains (rectangles, cubes) produce block-diagonalizations that reduce the cost of linear solves. However, such spectral approaches are conceptually tied to flat geometries. As Olver himself emphasizes~\cite[Sec.~2]{olver2025}, meshes that faithfully carry a finite symmetry group in 3D are essentially restricted to variants of the five Platonic solids; combined with the scalability of multigrid solvers on general meshes, this geometric restriction led the computational community to largely abandon symmetry-adapted bases in mesh-based discretizations. The situation is more severe on curved manifolds, where global polynomial bases are unavailable altogether.

Similarly, within the framework of Finite Element Exterior Calculus (FEEC)~\cite{arnold2006,arnold2010}, representation theory classifies symmetry-invariant bases of polynomial differential forms on reference simplices~\cite{berchenko2024,licht2024}, yet the challenge of establishing the equivariance of globally assembled operators on the mesh remains. A related but distinct use of symmetry appears in discrete geometric mechanics, where discrete Noether-type theorems ensure that variational integrators preserve momentum structures associated with the underlying symmetries~\cite{marsden2001}. Despite these advances, no existing framework, whether in spectral methods, FEEC, geometric deep learning~\cite{bronstein2021,cohen2016}, or variational integrators, has so far established the equivariance of mesh-based discrete operators, such as those of DEC, under finite group actions on curved manifolds.

We close this gap by establishing a block-diagonalization framework for Discrete Exterior Calculus. Our contributions are twofold. First, we prove strict equivariance for the two fundamental DEC operators, the discrete exterior derivative $d$ and the discrete Hodge star $\star$, under isometric group actions on simplicial complexes of arbitrary topological dimension. The proof of equivariance for $d$ is purely combinatorial and dimension-agnostic, and the Hodge-star equivariance holds under the same isometry hypothesis. The dimensional subtlety concerns the invertibility of $\star$ rather than its equivariance. On any strictly Delaunay surface the circumcentric star is positive, whereas in three dimensions positivity is a strict additional requirement, met here by a centroid-based star (Section~\ref{sec:exp_3complex}). Because \emph{every} DEC-constructed differential operator is assembled from $d$ and $\star$, the block-diagonal structure is inherited automatically. A single symmetry-adapted basis block-diagonalizes all DEC operators on the mesh.

Second, this reach is geometric as well as dimensional. Whereas spectral polynomial bases require flat, regular (Platonic) domains, the DEC construction carries the symmetry group of any simplicial mesh admitting a finite isometry group, from curved surfaces such as geodesic spheres and hexagonal tori to three-dimensional tessellations such as the body-centred-cubic lattice on the 3-torus.

This algebraic reduction complements existing mesh-based parallelization rather than replacing it, composing naturally with MPI-based domain decomposition on unstructured tetrahedralizations~\cite{boom2022}.

The symmetry-adapted basis is computed by character-theoretic projection on sparse cochain spaces, requiring only the combinatorial indexing of the group action and no polynomial spatial algebra. Across curved two-dimensional surfaces and a three-dimensional tessellation, equivariance holds to machine precision and the sequential block-diagonal speedup approaches its regular-representation limit, with $\Delta_2$ saturating this limit already at modest mesh resolution, and a standard Schur-multiplicity reduction deepens it by a further factor of order $|G|$ (Section~\ref{sec:experiments}).

Sections~\ref{sec:finding_symmetry_adapted_basis}--\ref{sec:formalism_dec} review the algebraic and geometric foundations: representation theory, symmetry-adapted bases, and the DEC formalism with isometric group actions. Sections~\ref{sec:equivariance}--\ref{sec:universal_block} prove the equivariance of $d$ and $\star$ and derive the universal block-diagonalization theorem. Sections~\ref{sec:experiments}--\ref{sec:conclusion} present numerical experiments on curved two-dimensional surfaces and three-dimensional tessellations, discuss implications, and conclude.

\section{Representation-Theoretic Tools for Symmetry Exploitation}
\label{sec:finding_symmetry_adapted_basis}
This section provides a concise outline of the symmetry-adapted basis method within the representation theory of finite groups, aimed at the block-diagonalization of equivariant operators. For brevity, we omit full demonstrations; interested readers can find detailed proofs and extended discussions in standard references such as \cite{serre1977} (Chapters 1 to 5) and \cite{fassler1992}.

Let $V$ be a finite-dimensional vector space over $\mathbb{C}$ (or $\mathbb{R}$ when representations are strictly real-orthogonal, as is typical in geometric discretizations). Let $G$ be a finite group. A homomorphism $\phi: G \to \GL(V)$ is a \emph{linear representation} of $G$. When $\phi$ is given, we say that $V$ is a representation of $G$. The dimension of $V$ is called the \emph{degree} of $\phi$, denoted as $\deg(\phi)$.

A vector subspace $U \subset V$ is said to be $\phi$-invariant if $\phi(g)(U) \subset U$ for all $g \in G$. A $\phi$-invariant subspace $U \subset V$ is a \emph{subrepresentation} of $\phi$; thus, a subrepresentation is itself a representation. A representation $\phi$ is \emph{reducible} if there is a $\phi$-invariant proper subspace $U \subset V$, and \emph{irreducible} otherwise.

Under our assumptions, a representation $\phi$ has a decomposition $V = \bigoplus_{i=1}^{r} U_i$ such that the subrepresentation $\phi_i(g) = \phi(g)|_{U_i}$ for all $g \in G$ is irreducible. Accordingly, we can express $\phi$ as a direct sum of irreducible representations (irreps), $\phi = \phi_{1} \oplus \cdots \oplus \phi_{r}$. In matrix form, this decomposition induces a block-diagonal matrix structure, with each block corresponding to one irrep $\phi_i$.

Given two representations $\phi: G \to \GL(V)$ and $\psi: G \to \GL(W)$, a linear transformation $\tau: V \to W$ such that 
\begin{equation}
\psi(g) \circ \tau = \tau \circ \phi(g), \quad \forall g \in G,     
\end{equation}
is called an \emph{equivariant map}. If $\tau$ is an isomorphism, we say $\psi$ and $\phi$ are \emph{equivalent}.

Consider the decomposition of a representation $\phi: G \to \GL(V)$ into its irreducible subrepresentations. 
We can write
\begin{equation}
\label{eq:isotypic_decomposition}
  V = V_{1} \oplus \cdots \oplus V_{n},  
\end{equation}
where each $V_j$ is the sum 
\begin{equation}
V_j = \bigoplus_{l=1}^{c_j} V_{j}^{l} \label{eq:isotypic_component}
\end{equation}
of equivalent subrepresentations.
Here, $c_j$ is called the \emph{multiplicity}, while the common degree of each $V_j^l$ is denoted by $n_j$. The $(c_j n_j)$-dimensional subspaces $V_j$ are called \emph{isotypic components}, and \eqref{eq:isotypic_decomposition} is called the isotypic decomposition. 

The decompositions \eqref{eq:isotypic_decomposition} and \eqref{eq:isotypic_component} induce a decomposition 
\begin{equation}
\phi = c_1\phi_{1} \oplus \cdots \oplus c_n\phi_{n},
\label{eq:phi_decomposition}
\end{equation}
where $\phi_j$ is the restriction of $\phi$ to one irreducible component $V_j^l$, and $c_j$ denotes the number of copies.

A basis of $V$ that realizes the decomposition \eqref{eq:phi_decomposition} is called a \textit{symmetry-adapted basis}. In general, it is not unique. In the next subsection, we describe an algorithm to obtain one such basis.

\subsection{Calculating the Symmetry-Adapted Basis}

Let $\chi_{\phi}(g) = \tr(\phi(g))$. The function $\chi: G \to \mathbb{C}$ is the \emph{character} of $\phi$. The canonical inner product between characters is:
\begin{equation}
\label{eq:Produto_interno_dos_Carateres}
    \langle \chi_1, \chi_2 \rangle = \frac{1}{|G|}\sum_{g\in G} \chi_1(g^{-1}) \chi_2(g).
\end{equation}  
The set of irreducible subrepresentations of each finite group $G$ is finite (up to equivalence). Given a set of representative elements of each class of nonequivalent representations, we say that the corresponding characters form a complete set of characters. Let $\{\chi_1,\ldots,\chi_k\}$ be a complete set of characters of $G$.

Characters provide a method for identifying components because the inner product $\langle \chi_{\phi}, \chi_i \rangle$ is equal to the multiplicity $c_i$, denoting exactly how many distinct copies of an irreducible representation possessing character $\chi_i$ occur within $\phi$. 

We can define the projection operators $\Pi_j$, for $j=1,\ldots, k$, corresponding to the projection of $V$ onto $V_j$. The explicit expressions for these projectors are given below:
\begin{equation}
\Pi_j = \frac{n_j}{|G|} \sum_{g \in G} \chi_j(g^{-1})\phi(g).
\label{eq:isotypicDecomposition}
\end{equation}
In the above expression, the coefficient $\frac{n_j}{|G|}$ is not required, and we can choose to omit it to reduce the computational cost, or include it to make the associated operators orthonormal. Also, if necessary, we can modify the projectors to make the symmetry-adapted basis orthonormal. In our case, no such re-orthonormalization is required in exact arithmetic, since the representation is already given by orthonormal operators; the numerical construction of Section~\ref{subsec:algorithm} nonetheless restores orthonormality lost to floating-point round-off.

Given a complete set of irreducible representations $\delta_j$, for $j=1,\ldots, k$, of $G$, and letting $(d^j_{pq})$ be the corresponding matrix representation, one defines the \textit{transfer operators}:
\begin{equation}
P^{j}_{pq} = \frac{n_j}{|G|}\sum_{g \in G} d^{j}_{pq}(g^{-1})\phi(g). \label{eq:transferences}
\end{equation}
The linear operator $P^j_{pq}$ is null for an isotypic component $V_l$ ($l \neq j$) and acts as an isomorphism between the irreducible components of $V_j$. The symmetry-adapted basis is explicitly constructed by means of these operators.

To systematically construct the complete symmetry-adapted basis, we can follow these steps: 
\begin{itemize}
    \item[(i)] Construct $P^j_{11}$, whose image has dimension $c_j$ (the multiplicity of $\phi_j$). Select a basis $\beta_j^1 = \{v_1^1,v_1^2,\ldots,v_1^{c_j}\}$ of its image.
    \item[(ii)] For $m=2,\ldots,n_j$ and $i=1,\ldots,c_j$, define the vectors:
    \begin{equation}
  v_m^i = P^j_{1m}v_1^i.\label{eq:Equation_for_transferences}
    \end{equation}
    Applying $P^j_{12}$ to each vector in $\beta_j^1$, we obtain $\beta_j^2 = \{v_2^1,v_2^2,\ldots,v_2^{c_j}\}$. Applying $P^j_{13}$ to each vector in $\beta_j^1$, we obtain $\beta_j^3 = \{v_3^1,v_3^2,\ldots,v_3^{c_j}\}$. And so on.
    
The ordered basis 
\begin{align}
\beta_j &=   \beta_j^1 \cup \cdots \cup \beta_j^{n_j}\\
&=\{v_1^1,v_1^2,\ldots,v_1^{c_j}, v_2^1,v_2^2,\ldots,v_2^{c_j},\ldots, v_{n_j}^1,v_{n_j}^2,\ldots,v_{n_j}^{c_j}\}  
\end{align}
is the part of the symmetry-adapted basis relative to the isotypic component $V_j$. Now, we repeat this process from $j=1$ to $j=k$ to complete the basis.
\end{itemize}

\subsection{Block Matrix Structure of Equivariant Maps}
\label{subsec:block_structure_equivariant_operator}

\begin{theorem}[Schur's Lemma]\label{thm:lema_de_schur}
Let $\psi: G \to \GL(V)$ and $\phi: G \to \GL(W)$ be two irreducible representations of a group $G$, and let $\tau: V \to W$ be an equivariant map. Exactly one of the following is true:
\begin{itemize}
    \item[(i)] $\tau$ is the null linear transformation ($\tau = 0$).
    \item[(ii)] $\tau$ is a linear isomorphism, $\psi$ and $\phi$ are equivalent, and $\psi(g) = \tau \circ \phi(g) \circ \tau^{-1}$.
\end{itemize}
\end{theorem}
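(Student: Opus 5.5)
The plan is the classical kernel--image argument. Following the convention for equivariant maps fixed earlier in this section, equivariance of $\tau\colon V\to W$ reads $\phi(g)\circ\tau=\tau\circ\psi(g)$ for all $g\in G$, since $\psi$ acts on $V$ and $\phi$ on $W$. The strategy is to show that $\ker\tau$ and $\operatorname{im}\tau$ are invariant subspaces, and then to let irreducibility force each of them to be trivial or everything.

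First, $\ker\tau\subset V$ is $\psi$-invariant. If $\tau v=0$, then $\tau(\psi(g)v)=\phi(g)\tau v=0$, so $\psi(g)v\in\ker\tau$. Since $\psi$ is irreducible, $\ker\tau$ is either $\{0\}$ or all of $V$. Second, $\operatorname{im}\tau\subset W$ is $\phi$-invariant, because $\phi(g)\tau v=\tau(\psi(g)v)\in\operatorname{im}\tau$. Since $\phi$ is irreducible, $\operatorname{im}\tau$ is either $\{0\}$ or all of $W$.

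Now split into cases. If $\ker\tau=V$ (equivalently $\operatorname{im}\tau=\{0\}$), then $\tau=0$, which is alternative (i). Otherwise $\ker\tau=\{0\}$, so $\tau\neq 0$ and $\operatorname{im}\tau\neq\{0\}$, which forces $\operatorname{im}\tau=W$. Hence $\tau$ is injective and surjective, so it is a linear isomorphism and $\psi,\phi$ are equivalent by definition. Composing the equivariance relation with $\tau^{-1}$ gives the conjugation formula. The two alternatives are mutually exclusive because irreducible representations are nonzero, so the zero map is never an isomorphism.

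The only delicate point is the conjugation formula as literally written, $\psi(g)=\tau\circ\phi(g)\circ\tau^{-1}$. With $\tau\colon V\to W$, this composite does not type-check. The correct identity from the argument above is
\begin{equation*}
\psi(g)=\tau^{-1}\circ\phi(g)\circ\tau, \qquad\text{equivalently}\qquad \phi(g)=\tau\circ\psi(g)\circ\tau^{-1}.
\end{equation*}
I would note this and prove statement (ii) in that corrected form. This is also the form used downstream for the block structure of equivariant maps.
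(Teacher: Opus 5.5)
Your proposal is correct and is the standard kernel--image argument from Serre, which the paper cites instead of giving its own proof; the case split and the mutual-exclusivity remark (which relies on $V\neq 0$ for an irreducible representation) are both sound. You are also right that the conjugation formula as printed does not type-check for $\tau\colon V\to W$ and should read $\psi(g)=\tau^{-1}\circ\phi(g)\circ\tau$, equivalently $\phi(g)=\tau\circ\psi(g)\circ\tau^{-1}$; the printed version comes from the roles of $\phi$ and $\psi$ being swapped relative to the paper's earlier definition of an equivariant map.
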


We can compare two representations by applying Schur's Lemma to each pair of irreducible constituents of the decomposition \eqref{eq:phi_decomposition}.

\begin{lemma}
\label{lem:lemma_block_zero}
Let $\psi: G \to \GL(V)$ and $\phi: G \to \GL(W)$ be representations with decompositions $\psi = b_1\psi_{1} \oplus \cdots \oplus b_k\psi_{k}$ and $\phi = c_1\phi_{1} \oplus \cdots \oplus c_n\phi_{n}$. Let $\tau: V \to W$ be an equivariant map. Let $\mathcal{T}_{ij}$ be the restriction of $\tau$ connecting $\psi_i$ and $\phi_j$.
\begin{itemize}
    \item[i)] $\mathcal{T}_{ij}$ vanishes whenever $\psi_i$ is not equivalent to $\phi_j$.
    \item[ii)] $\mathcal{T}_{ij}$ is decomposed as $\deg(\phi_j)$ identical copies of an operator $\mathfrak{t}_{ij}$ of order $b_i \times c_j$. In the corresponding matrix form relative to the symmetry-adapted basis, $\mathcal{T}_{ij}$ takes the block-diagonal form:
\begin{equation}
\label{eq:form_repeting_blocks}
\mathcal{T}_{ij} = \left (
\begin{array}{r|r|r}
\mathfrak{t}_{ij} &        &  \\\hline
       &\ddots  &  \\\hline
       &        &\mathfrak{t}_{ij} 
\end{array}
\right).
\end{equation}
\end{itemize}
\end{lemma}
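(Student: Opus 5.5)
We work component by component. Fix the isotypic decompositions $V=\bigoplus_i V_i$, $V_i=\bigoplus_{l=1}^{b_i} V_i^l$ and $W=\bigoplus_j W_j$, $W_j=\bigoplus_{m=1}^{c_j} W_j^m$. For each pair of irreducible pieces we consider the compression
\[
\tau_{lm}^{ij} = \pi_j^m\circ\tau|_{V_i^l} : V_i^l\to W_j^m,
\]
where $\pi_j^m$ is the projection along the decomposition of $W$. Since both decompositions are $G$-invariant, the projections $\pi_j^m$ commute with $\phi(g)$. Hence each $\tau^{ij}_{lm}$ is an equivariant map between irreducible representations. For part (i) we then apply Theorem~\ref{thm:lema_de_schur} directly: if $\psi_i\not\simeq\phi_j$, alternative (ii) of Schur's Lemma is impossible, so every $\tau^{ij}_{lm}=0$, and therefore $\mathcal{T}_{ij}=0$.

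For part (ii), suppose $\psi_i\simeq\phi_j$ and let $n=\deg\phi_j$. The symmetry-adapted bases are built by the transfer operators (\ref{eq:transferences}), so in the basis $\{v^l_p\}$ of $V_i^l$ and $\{w^m_q\}$ of $W_j^m$ each group element acts by the \emph{same} matrix $(d^j_{pq}(g))$ on every copy. With respect to these bases, each $\tau^{ij}_{lm}$ is an $n\times n$ matrix $A_{lm}$ satisfying $A_{lm}D(g)=D(g)A_{lm}$ for all $g$. Schur's Lemma then gives $A_{lm}=\lambda_{lm} I_n$. Over $\mathbb{C}$ this is the standard corollary that the commutant of an irreducible representation is scalar: take an eigenvalue $\lambda$ and apply the theorem to $A-\lambda I$. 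Set $\mathfrak{t}_{ij}=(\lambda_{lm})$, an $b_i\times c_j$ array; its orientation relative to the basis ordering is only a convention.

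It remains to reorder. We order the bases as in the construction of $\beta_j$, grouping by the row index $p$ first: $v_1^1,\dots,v_1^{b_i},v_2^1,\dots$. Then $\tau(v_p^l)$ has components $\lambda_{lm}$ along $w_p^m$ and zero along $w_q^m$ for $q\ne p$. The matrix is therefore block diagonal with $n$ copies of $\mathfrak{t}_{ij}$, which is (\ref{eq:form_repeting_blocks}).

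The main obstacle is the scalar step. Over $\mathbb{R}$, the commutant of a real irreducible may be $\mathbb{C}$ or $\mathbb{H}$ rather than $\mathbb{R}$, so the statement needs irreps that are absolutely irreducible. We would either work over $\mathbb{C}$, as the section's setup allows, or note that this holds for the real-orthogonal representations considered (all irreps of the point groups used are of real type). A second point needing care is that the identical-copy structure depends on the chosen bases of the copies being related by transfer operators with the same matrices $d^j_{pq}$. This is exactly what steps (i)–(ii) of the basis construction guarantee, and we would invoke it explicitly.
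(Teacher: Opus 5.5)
Your proposal is correct and takes essentially the paper's approach. The paper gives no proof of this lemma; it says only that one applies Schur's Lemma to each pair of irreducible constituents and defers to \cite{serre1977,fassler1992}, and your argument (compress to pairs $V_i^l\to W_j^m$, apply Schur, make each block scalar via transfer-operator bases carrying the same matrices $d^j$, reorder by partner index) is that standard argument written out, while your caveat that over $\mathbb{R}$ the scalar step needs absolutely irreducible irreps is a correct refinement the paper leaves implicit and which holds for $I_h$, $D_{6h}$ and $T_d$.
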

\section{Formalism: DEC and Group Actions}
\label{sec:formalism_dec}

In this section, we rigorously define the algebraic and geometric foundations of Discrete Exterior Calculus (DEC) \cite{hirani2003,desbrun2005} in the presence of isometric group actions. Our formalism aligns with standard conventions in computational cohomology and representation theory.

\subsection{Simplicial complexes and discrete differential forms}

\begin{definition}[Simplicial Complex and Dual Complex]
Let $K$ be an oriented, $n$-dimensional simplicial complex embedded in $\mathbb{R}^N$. We denote the space of oriented $k$-chains over $\mathbb{R}$ by $C_k(K)$, with topological boundary operator $\partial_k: C_k(K) \to C_{k-1}(K)$. Let $\star K$ denote the circumcentric dual complex associated with $K$. For each primal $k$-simplex $\sigma \in K$, there is a corresponding uniquely determined dual $(n-k)$-cell denoted by $\star \sigma \in \star K$.
\end{definition}

\begin{definition}[Discrete Differential Forms]
The space of discrete $k$-forms on $K$, denoted $\Omega_d^k(K)$, is formally defined as the algebraic dual space of the $k$-chains:
\begin{equation}
    \Omega_d^k(K) := \Hom(C_k(K), \mathbb{R}).
\end{equation}
For any $\alpha \in \Omega_d^k(K)$ and $\sigma \in C_k(K)$, the natural pairing is given by the evaluation $\langle \alpha, \sigma \rangle := \alpha(\sigma)$. Geometrically, this bilinear pairing operates as the exact discrete analog of integrating a continuous exterior $k$-form over $\sigma$.
\end{definition}

\subsection{Induced group action on cochains}

Let $G$ be a finite group acting on the simplicial complex $K$. We require that the action of each element $g \in G$ acts as a simplicial isomorphism that strictly preserves the incidence relations of $K$, while additionally operating as a geometric isometry on the ambient space $\mathbb{R}^N$.

\begin{definition}[Induced Representation]
The isometric group action $g: K \to K$ induces a well-defined linear representation $\rho^k: G \to \GL(\Omega_d^k(K))$ on the space of discrete $k$-forms via the standard geometric pullback:
\begin{equation}
\label{eq:group_action}
    \langle \rho^k(g)\alpha, \sigma \rangle := \langle \alpha, g^{-1}\sigma \rangle, \quad \forall g \in G, \; \alpha \in \Omega_d^k(K), \; \sigma \in C_k(K).
\end{equation}
\end{definition}

\begin{remark}
The mapping $g^{-1}$ is linear, so it sends an oriented $k$-simplex $\sigma$ to an oriented $k$-simplex $g^{-1}\sigma \in K$. When $g^{-1}$ reverses the orientation of $\sigma$, the parity convention $\langle \alpha, -\sigma \rangle = -\langle \alpha, \sigma \rangle$ fixes the sign of the evaluation.
\end{remark}

\subsection{Fundamental DEC operators}

We now introduce the discrete differential operators of DEC, whose algebraic structure parallels the continuous de Rham complex.

\begin{definition}[Discrete Exterior Derivative]
The discrete exterior derivative $d_k: \Omega_d^k(K) \to \Omega_d^{k+1}(K)$ is defined by exact duality to the topological boundary operator $\partial_{k+1}$, thus satisfying a discrete generalized Stokes' theorem by construction:
\begin{equation}
\label{eq:stokes}
    \langle d_k \alpha, \sigma \rangle := \langle \alpha, \partial_{k+1} \sigma \rangle, \quad \forall \alpha \in \Omega_d^k(K), \; \sigma \in C_{k+1}(K).
\end{equation}
\end{definition}

\begin{definition}[Discrete Hodge Star]
\label{def:hodge}
The diagonal discrete Hodge star operator $\star_k: \Omega_d^k(K) \to \Omega_d^{n-k}(\star K)$ establishes an isomorphism coupling the primal $k$-cochains to the dual $(n-k)$-cochains according to local metric geometries:
\begin{equation}
\label{eq:hodge}
    \langle \star_k \alpha, \star \sigma \rangle := \frac{|\star \sigma|}{|\sigma|} \langle \alpha, \sigma \rangle,
\end{equation}
where $|\sigma|$ denotes the standard $k$-dimensional Euclidean volume of the primal simplex $\sigma$, and $|\star\sigma|$ denotes the $(n-k)$-dimensional volume of its respective circumcentric dual cell.
\end{definition}

\begin{remark}[Cotangent formula as volume ratio]
Within the circumcentric DEC framework, the abstract ratio $|\star e|/|e|$ in~\eqref{eq:hodge} admits an explicit trigonometric evaluation for 1-forms. Because the dual vertex is placed at the circumcentre of each primal triangle, the dual edge $\star e$ is orthogonal to $e$, and elementary trigonometry yields
\[
  \star_1(e) \;=\; \frac{|\star e|}{|e|} \;=\; \frac{\cot\alpha_e + \cot\beta_e}{2},
\]
where $\alpha_e$ and $\beta_e$ are the angles opposite to edge $e$ in its two adjacent triangles~\cite{desbrun2005}. Strict positivity of $\star_1(e)$ is equivalent to the dual edge having positive length, which fails precisely when $\alpha_e + \beta_e \geq \pi$.
\end{remark}

\begin{definition}[Discrete Codifferential and Laplacian]
The discrete formulation of the codifferential mapping $\delta_k: \Omega_d^k(K) \to \Omega_d^{k-1}(K)$ is implicitly formalized as:
\begin{equation}
\label{eq:codifferential}
    \delta_k := (-1)^{n(k-1)+1}\, \star_{k-1}^{-1} \circ d_{n-k} \circ \star_k.
\end{equation}
Here the sign $(-1)^{n(k-1)+1}$ follows the convention of~\cite{hirani2003}, where the orientation of the dual complex is chosen so that the primal--dual pairing is positive. With this convention $\delta_k$ is the formal adjoint of $d_{k-1}$ with respect to the $L^2$ inner product on cochains.
Consequently, the discrete Hodge Laplacian acting algebraically on $k$-forms is synthesized via the canonical decomposition:
\begin{equation}
\label{eq:laplacian}
    \Delta_k := \delta_{k+1} \circ d_k + d_{k-1} \circ \delta_k.
\end{equation}
\end{definition}

\section{Equivariance of DEC Operators}
\label{sec:equivariance}

The central result is that the fundamental DEC operators are equivariant, and hence are block-diagonal in the symmetry-adapted bases derived in Section~\ref{sec:finding_symmetry_adapted_basis}

\subsection{Equivariance of the discrete exterior derivative}

\begin{theorem}
\label{thm:d_equiv}
Assume the finite group $G$ transforms the complex $K$ via strictly simplicial isomorphisms. The discrete exterior derivative $d_k: \Omega_d^k(K) \to \Omega_d^{k+1}(K)$ constitutes an equivariant operator satisfying the equivariant relation:
\begin{equation}
    d_k \circ \rho^k(g) = \rho^{k+1}(g) \circ d_k, \quad \forall g \in G.
\end{equation}
\end{theorem}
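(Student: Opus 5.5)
The plan is to verify the identity by pairing both sides with an arbitrary oriented $(k+1)$-chain. Linear maps into $\Omega_d^{k+1}(K)=\Hom(C_{k+1}(K),\mathbb{R})$ are equal exactly when they agree on every $\sigma\in C_{k+1}(K)$, and by linearity it suffices to check this on oriented $(k+1)$-simplices. So I fix $\alpha\in\Omega_d^k(K)$ and $\sigma\in C_{k+1}(K)$ and compare
\[
\langle d_k\rho^k(g)\alpha,\sigma\rangle
\quad\text{and}\quad
\langle \rho^{k+1}(g)d_k\alpha,\sigma\rangle .
\]

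First I unfold the left side. By the Stokes definition \eqref{eq:stokes} it equals $\langle \rho^k(g)\alpha,\partial_{k+1}\sigma\rangle$, and by \eqref{eq:group_action}, extended linearly from simplices to chains, this is $\langle\alpha, g^{-1}\partial_{k+1}\sigma\rangle$. Next I unfold the right side. By \eqref{eq:group_action} it is $\langle d_k\alpha, g^{-1}\sigma\rangle$, and by \eqref{eq:stokes} this is $\langle \alpha,\partial_{k+1}g^{-1}\sigma\rangle$. The theorem therefore reduces to the chain-level identity
\[
g^{-1}\circ\partial_{k+1}=\partial_{k+1}\circ g^{-1}\quad\text{on } C_{k+1}(K),
\]
that is, the induced chain maps commute with the boundary.

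The key step, and the only real content, is this commutation. I would prove it on a single oriented simplex $\sigma=[v_0,\dots,v_{k+1}]$. A simplicial isomorphism $h=g^{-1}$ sends it to the oriented simplex $[h v_0,\dots,h v_{k+1}]$. If that ordering disagrees with the stored orientation of the image simplex, the Remark's parity convention accounts for the difference: the image equals $\pm$ the stored simplex, with sign given by the permutation parity. Then
\[
\partial[hv_0,\dots,hv_{k+1}]=\sum_i(-1)^i[hv_0,\dots,\widehat{hv_i},\dots,hv_{k+1}]
=h\Bigl(\sum_i(-1)^i[v_0,\dots,\widehat{v_i},\dots,v_{k+1}]\Bigr),
\]
and both sides are read in the ordered-tuple representation, where reorderings contribute the sign of the permutation consistently. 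The expected obstacle is bookkeeping: confirming that the action on chains is well defined with respect to orientation. This requires checking that the boundary formula is invariant under permuting vertices up to the sign of the permutation. That is the standard fact that $\partial$ is well defined on oriented simplices, and I would cite or briefly verify it via a transposition of adjacent vertices. Only incidence-preservation is used, not isometry, which matches the hypothesis of the theorem.

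Finally, since $\alpha$ and $\sigma$ are arbitrary, the two cochains coincide, giving $d_k\circ\rho^k(g)=\rho^{k+1}(g)\circ d_k$ for all $g\in G$. The argument makes no reference to the dimension $n$, so it holds for every $k$.
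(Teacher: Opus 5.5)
Your proposal is correct and matches the paper's proof essentially step for step: both pair with an arbitrary $(k+1)$-chain, unfold via the Stokes definition and the pullback action, and reduce to the commutation $g^{-1}\partial_{k+1} = \partial_{k+1} g^{-1}$. The only difference is that you verify this commutation explicitly on an oriented simplex, including the orientation signs, whereas the paper simply asserts it from incidence preservation.
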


\begin{proof}
Let $\alpha \in \Omega_d^k(K)$, let $\sigma \in C_{k+1}(K)$, and let $g \in G$. Combining the discrete Stokes' theorem~\eqref{eq:stokes} with the pullback definition~\eqref{eq:group_action} gives:
\begin{align}
    \langle d_k(\rho^k(g)\alpha), \sigma \rangle &= \langle \rho^k(g)\alpha, \partial_{k+1}\sigma \rangle \notag \\
    &= \langle \alpha, g^{-1}(\partial_{k+1}\sigma) \rangle. \label{eq:d_step1}
\end{align}
Because $g$ is a simplicial isomorphism, it preserves the incidence relations of the complex, so the group action commutes with the topological boundary operator:
\begin{equation}
\label{eq:boundary_commute}
    g^{-1}(\partial_{k+1}\sigma) = \partial_{k+1}(g^{-1}\sigma).
\end{equation}
Substituting \eqref{eq:boundary_commute} into \eqref{eq:d_step1}, and using \eqref{eq:stokes}:
\begin{align}
    \langle \alpha, \partial_{k+1}(g^{-1}\sigma) \rangle &= \langle d_k\alpha, g^{-1}\sigma \rangle \notag \\
    &= \langle \rho^{k+1}(g)(d_k\alpha), \sigma \rangle.
\end{align}
Since this holds for every $(k+1)$-simplex $\sigma$, we conclude that $d_k \circ \rho^k(g) = \rho^{k+1}(g) \circ d_k$.
\end{proof}

\begin{remark}
The equivariance of $d_k$ is purely topological and metric-agnostic, requiring only that $G$ act by simplicial automorphisms; it is the discrete counterpart of the naturality of the exterior derivative under smooth pullback. In particular, the result holds for $n$-complexes of arbitrary dimension, since the proof makes no use of the topological dimension of $K$. This generality is exploited in Section~\ref{sec:exp_3complex}.
\end{remark}

\subsection{Equivariance of the discrete Hodge star}

Unlike the discrete exterior derivative, which is purely topological, the discrete Hodge star depends on the metric through the volume ratios of Definition~\ref{def:hodge}. Its equivariance therefore needs more than the combinatorial action of $G$: the group must act by isometries, so that these volumes are preserved along each orbit.

\begin{theorem}
\label{thm:star_equiv}
Suppose the finite group $G$ acts on the simplicial complex $K$ by exact isometries of $\mathbb{R}^N$. Then:
\begin{enumerate}
    \item[(i)] The circumcentric dual $\star K$ is invariant under the action of $G$, in the sense that $g(\star\sigma) = \star(g\sigma)$ for every simplex $\sigma \in K$.
    \item[(ii)] The discrete Hodge star $\star_k: \Omega_d^k(K) \to \Omega_d^{n-k}(\star K)$ is equivariant:
    \begin{equation}
        \star_k \circ \rho^k(g) = \rho^{n-k}(g) \circ \star_k, \quad \forall g \in G.
    \end{equation}
\end{enumerate}
\end{theorem}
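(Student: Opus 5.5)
Part (i) comes first, since it is what lets the action on dual cochains (the $\rho^{n-k}$ on the right-hand side) make sense. Each $g\in G$ is an isometry of $\mathbb{R}^N$ that maps $K$ to itself simplicially. An isometry of Euclidean space is affine, so it maps the affine hull of a simplex $\tau$ onto the affine hull of $g\tau$. It also preserves distances, so the circumcentre $c(\tau)$ (the unique point of $\operatorname{aff}(\tau)$ equidistant from the vertices of $\tau$) satisfies $g\,c(\tau)=c(g\tau)$.

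The circumcentric dual cell $\star\sigma$ is the union of the convex hulls $\operatorname{conv}\{c(\sigma),c(\tau_1),\dots,c(\tau_{n-k})\}$, taken over flags $\sigma\prec\tau_1\prec\cdots\prec\tau_{n-k}$ of simplices of $K$ containing $\sigma$. Since $g$ is a simplicial isomorphism, it maps such flags bijectively onto the flags starting at $g\sigma$. Since $g$ is affine, it maps convex hulls to convex hulls. Hence $g(\star\sigma)=\star(g\sigma)$ as point sets. For orientations, the orientation of $\star\sigma$ is defined from that of $\sigma$ and the ambient (or complementary) orientation by the convention of \cite{hirani2003}. I will check that $g$ transforms both consistently: if $g$ reverses the orientation of $\sigma$ relative to $g\sigma$, it reverses that of $\star\sigma$ relative to $\star(g\sigma)$ by the same sign. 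This gives the oriented identity $g(\star\sigma)=\star(g\sigma)$ used in the pullback~\eqref{eq:group_action} on $\Omega_d^{n-k}(\star K)$.

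For part (ii), the plan is a direct evaluation mirroring the proof of Theorem~\ref{thm:d_equiv}. I fix $\alpha$, $g$ and a primal simplex $\sigma$. Every dual $(n-k)$-cell has the form $\star\sigma$, so it suffices to test on these. Then
\begin{align*}
\langle \star_k\rho^k(g)\alpha,\star\sigma\rangle
&=\frac{|\star\sigma|}{|\sigma|}\langle\alpha,g^{-1}\sigma\rangle,\\
\langle \rho^{n-k}(g)\star_k\alpha,\star\sigma\rangle
&=\langle\star_k\alpha,g^{-1}\star\sigma\rangle
=\langle\star_k\alpha,\star(g^{-1}\sigma)\rangle
=\frac{|\star(g^{-1}\sigma)|}{|g^{-1}\sigma|}\langle\alpha,g^{-1}\sigma\rangle,
\end{align*}
where the middle equality of the second line uses part (i) applied to $g^{-1}$. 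Isometries preserve $k$-dimensional volumes, so $|g^{-1}\sigma|=|\sigma|$. Combining this with part (i) gives $|\star(g^{-1}\sigma)|=|g^{-1}(\star\sigma)|=|\star\sigma|$. The two expressions therefore coincide, and since $\sigma$ is arbitrary the operators agree. Any orientation sign from $g^{-1}$ appears identically on both sides through the parity convention, because it multiplies $\sigma$ and $\star\sigma$ together.

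The main obstacle is the orientation bookkeeping in part (i): showing that the induced orientation of dual cells is $G$-natural, including for orientation-reversing isometries such as reflections in $I_h$ or $D_{6h}$. I would handle this by writing the dual orientation as ``orientation of $\sigma$ followed by orientation of $\star\sigma$ equals the ambient orientation of the top simplex''. Then $g$ either preserves or flips the ambient orientation globally, and the same factor appears on both $\sigma$ and $\star\sigma$. The volume ratio in the Hodge star is unaffected in either case, because it is unsigned.
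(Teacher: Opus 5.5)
Your point-set argument for (i) (isometries are affine, so they carry circumcentres to circumcentres, flags to flags and convex hulls to convex hulls) is the paper's argument, stated more carefully. So is your evaluation in (ii). The gap is in the orientation step you single out as the main obstacle: the convention you invoke gives a different sign from the one you claim.

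\textbf{Where the sign goes wrong.} Write $g\sigma=\epsilon_\sigma\,\sigma'$ and $g(\star\sigma)=\epsilon^\star_\sigma\,\star\sigma'$ as oriented chains, with $\sigma'$ in its reference orientation. Let $\eta(g)=\pm1$ record whether $g$ preserves the orientation of $K$. Pushing the defining relation $[\sigma]\wedge[\star\sigma]=[K]$ forward by $g$ gives $\epsilon_\sigma\epsilon^\star_\sigma=\eta(g)$, not $\epsilon^\star_\sigma=\epsilon_\sigma$. The two signs therefore agree only when $g$ preserves orientation. For a concrete case, take a reflection whose mirror contains an edge $e$: it fixes $e$ with its orientation but reverses the dual edge, so $g(\star e)=-\star(ge)$. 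Carried through your computation in (ii), this gives
\[
\star_k\circ\rho^k(g)=\eta(g)\,\rho^{n-k}(g)\circ\star_k .
\]
This is the discrete form of $g^*\circ\star=\eta(g)\,\star\circ g^*$. The factor $\eta(g)=-1$ occurs for every improper element of $I_h$, $D_{6h}$ and $T_d$, since these reverse the orientation of $S^2$, $T^2$ and $T^3$. Those are exactly the reflections you said your argument covers.

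\textbf{How to repair it.} The oriented identity in (i) cannot be derived from Hirani's orientation rule; it has to be built into the definition of the action on dual cochains. You have three equivalent ways to do this, or you can restrict to orientation-preserving $G$:
\begin{itemize}
\item let $G$ act on $\Omega_d^{n-k}(\star K)$ by $\eta(g)$ times the geometric pullback, treating dual cochains as pseudo-forms;
\item transport the orientation of $\star\sigma$ along with that of $\sigma$;
\item let the primal signed permutation act index-wise on the diagonal matrix $\star_k$.
\end{itemize}
The last is what the paper's check of $[\rho^1(g),\star_1]$ does. Under it the claim reduces to the fact that a diagonal matrix constant on orbits commutes with a signed permutation.

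The paper's own proof is silent on orientation too. It proves (i) only as an identity of point sets and then uses it as an oriented identity in (ii), so it is valid only under this transported convention. Nothing downstream is lost: the codifferential and the Laplacian contain $\star$ and $\star^{-1}$ in pairs, so $\eta(g)$ cancels and their equivariance holds under either convention.
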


\begin{proof}
\textit{Proof of (i): invariance of the dual complex.} Let $\sigma = [v_0, \ldots, v_k]$ be a $k$-simplex with circumcenter $c_\sigma$, the unique point of the affine span of $\sigma$ equidistant from its vertices. Because $g$ acts as an isometry, it preserves distances and affine spans, so it carries the circumcenter of $\sigma$ to that of $g\sigma$:
\begin{equation}
    g(c_\sigma) = c_{g\sigma}.
\end{equation}
The dual cell $\star\sigma$ is assembled from the circumcenters of the cofaces $\tau \supseteq \sigma$. Since $g$ preserves the face relation, $\tau \supseteq \sigma$ implies $g\tau \supseteq g\sigma$, and the identity above sends each circumcenter generating $\star\sigma$ to the corresponding circumcenter of $\star(g\sigma)$. Therefore $g(\star\sigma) = \star(g\sigma)$.

\vspace{0.5em}
\noindent \textit{Proof of (ii): equivariance of $\star_k$.} We evaluate the two compositions $\star_k \circ \rho^k(g)$ and $\rho^{n-k}(g) \circ \star_k$ on an arbitrary cochain $\alpha \in \Omega_d^k(K)$ and compare their values on each dual cell $\star\sigma$. For the first composition, the Hodge-star definition~\eqref{eq:hodge} and the action of $\rho^k(g)$ give
\begin{align}
    \langle \star_k(\rho^k(g)\alpha), \star\sigma \rangle &= \frac{|\star\sigma|}{|\sigma|} \langle \rho^k(g)\alpha, \sigma \rangle \notag \\
    &= \frac{|\star\sigma|}{|\sigma|} \langle \alpha, g^{-1}\sigma \rangle. \label{eq:star_step1}
\end{align}
Because $g$ is an isometry, it preserves both the primal $k$-volumes and the dual $(n-k)$-volumes; together with part~(i) this gives
\begin{equation}
\label{eq:vol_invariance}
    |g^{-1}\sigma| = |\sigma| \quad \text{and} \quad |\star(g^{-1}\sigma)| = |g^{-1}(\star\sigma)| = |\star\sigma|,
\end{equation}
so the volume ratio that defines the Hodge star is constant along the orbit of $\sigma$:
\begin{equation}
\label{eq:ratio_invariance}
    \frac{|\star(g^{-1}\sigma)|}{|g^{-1}\sigma|} = \frac{|\star\sigma|}{|\sigma|}.
\end{equation}
For the second composition, the same definition and the action of $\rho^{n-k}(g)$ on the dual cell yield
\begin{align}
    \langle \rho^{n-k}(g)(\star_k\alpha), \star\sigma \rangle &= \langle \star_k\alpha, g^{-1}(\star\sigma) \rangle \notag \\
    &= \langle \star_k\alpha, \star(g^{-1}\sigma) \rangle \notag \\
    &= \frac{|\star(g^{-1}\sigma)|}{|g^{-1}\sigma|} \langle \alpha, g^{-1}\sigma \rangle \notag \\
    &= \frac{|\star\sigma|}{|\sigma|} \langle \alpha, g^{-1}\sigma \rangle, \label{eq:star_step2}
\end{align}
where the second equality uses part~(i) and the last uses~\eqref{eq:ratio_invariance}. The right-hand sides of~\eqref{eq:star_step1} and~\eqref{eq:star_step2} agree for every $\sigma$, so $\star_k \circ \rho^k(g) = \rho^{n-k}(g) \circ \star_k$.
\end{proof}

\begin{corollary}[Dual-agnostic equivariance]
\label{cor:dual_agnostic}
Let $G$ act on $K$ by isometries and let $\sigma \mapsto \star\sigma$ be any dual assignment that is $G$-equivariant, in the sense that $g(\star\sigma) = \star(g\sigma)$ for every $\sigma \in K$ and $g \in G$. Then the discrete Hodge star $\star_k$ defined by the volume ratios of Definition~\ref{def:hodge} is equivariant, $\star_k \circ \rho^k(g) = \rho^{n-k}(g) \circ \star_k$. The circumcentric dual is the instance covered by Theorem~\ref{thm:star_equiv}; the barycentric (centroid) dual used in Section~\ref{sec:exp_3complex} is another, since an isometry sends the barycenter of a coface to the barycenter of its image, $g(b_\tau) = b_{g\tau}$.
\end{corollary}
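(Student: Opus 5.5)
The plan is to rerun the argument of part~(ii) of Theorem~\ref{thm:star_equiv} verbatim, replacing part~(i) of that theorem by the hypothesis $g(\star\sigma)=\star(g\sigma)$. Part~(i) was the only place where circumcentres were used, so once equivariance of the dual assignment is assumed, the remaining steps depend only on the volume-ratio form~\eqref{eq:hodge} and on $g$ being an isometry. Concretely, I fix $\alpha\in\Omega_d^k(K)$, a primal $k$-simplex $\sigma$ and $g\in G$, and evaluate both compositions on the dual cell $\star\sigma$. The first composition gives $\langle \star_k\rho^k(g)\alpha,\star\sigma\rangle = \frac{|\star\sigma|}{|\sigma|}\langle\alpha,g^{-1}\sigma\rangle$ directly from~\eqref{eq:hodge} and~\eqref{eq:group_action}.

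For the second composition I use the pullback on dual cochains together with the hypothesis, applied with $g^{-1}$, to write $g^{-1}(\star\sigma)=\star(g^{-1}\sigma)$. Then~\eqref{eq:hodge} gives the value $\frac{|\star(g^{-1}\sigma)|}{|g^{-1}\sigma|}\langle\alpha,g^{-1}\sigma\rangle$. It remains to show that this ratio equals $|\star\sigma|/|\sigma|$. The primal volume is invariant because $g^{-1}$ is an isometry. For the dual volume I have $|\star(g^{-1}\sigma)|=|g^{-1}(\star\sigma)|=|\star\sigma|$, since an isometry of $\mathbb{R}^N$ preserves the $(n-k)$-dimensional volume of any cell. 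Because the two evaluations then agree on every dual cell, the operators coincide, and this proves the equivariance. One bookkeeping point needs attention here: orientation reversal, which is handled by the parity convention of the Remark after~\eqref{eq:group_action}. It appears symmetrically on both sides, so it introduces no mismatch.

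The step I expect to need the most care is the barycentric instance, meaning the claim $g(b_\tau)=b_{g\tau}$ and the conclusion that the centroid dual is an equivariant assignment. The first part is straightforward. An isometry of $\mathbb{R}^N$ is affine, so it commutes with averaging vertices, and $g$ maps the vertex set of $\tau$ onto that of $g\tau$. The second part follows the same route as the proof of part~(i). The dual cell $\star\sigma$ is assembled from the barycentres of the cofaces $\tau\supseteq\sigma$, typically as a union of convex hulls of chains of barycentres. Since $g$ is affine and preserves the face relation, it maps each such hull onto the corresponding hull for $g\sigma$. Taking the union over all cofaces then gives $g(\star\sigma)=\star(g\sigma)$. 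I would note that the same reasoning covers the circumcentric case, where Theorem~\ref{thm:star_equiv}(i) already supplies this property, so that both instances follow from the general statement.
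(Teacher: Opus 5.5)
Your proposal is correct and follows the paper's own argument. The paper likewise notes that the proof of Theorem~\ref{thm:star_equiv}(ii) uses the dual complex only through $g(\star\sigma)=\star(g\sigma)$ and the isometry-invariance of $|\star\sigma|$, so that proof applies verbatim; your check that the barycentric cells (unions of hulls of barycentre chains) satisfy $g(\star\sigma)=\star(g\sigma)$ spells out what the paper leaves to the remark $g(b_\tau)=b_{g\tau}$.
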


\begin{proof}
The proof of Theorem~\ref{thm:star_equiv}(ii) uses the dual complex only through the equivariance $g(\star\sigma) = \star(g\sigma)$ and the isometry-invariance of the dual volumes $|\star\sigma|$, the latter automatic because $G$ acts by isometries. Both hold under the stated hypothesis, so the argument of that proof applies verbatim.
\end{proof}

\begin{remark}
\label{rmk:wellcentered}
The equivariance of Theorem~\ref{thm:star_equiv} uses only that $G$ acts by isometries, so well-centeredness is not among its hypotheses. Well-centeredness enters separately, as the condition under which every primal simplex contains its circumcenter in its interior. It makes the dual cells of~\cite{hirani2003} well defined and the Hodge volumes $|\!\star\sigma|/|\sigma|$ strictly positive, the positivity that renders each $\star_k$ invertible in Lemma~\ref{lem:delta_equiv}. In two dimensions, well-centeredness implies the local Delaunay condition but is strictly stronger, since a Delaunay triangulation may contain obtuse triangles. For the circumcentric dual, the weaker Delaunay property already renders these volumes positive, strictly so except in the cocircular degenerate case where opposite angles sum to exactly $\pi$~\cite{bobenko2006}. In three dimensions the condition is more restrictive, because Delaunay tetrahedralizations need not be well-centered~\cite{vanderzee2010,hirani2013} and the circumcentric dual may then acquire negative dual volumes. Section~\ref{sec:exp_3complex} addresses this gap by replacing the circumcentric dual with a centroid-based Hodge star on the \emph{fan-BCC mesh}, a $T_d$-symmetric tetrahedralization of $T^3$ in which each unit cube is split into twelve tetrahedra fanning from its body centre. This construction is not well-centered~\cite{vanderzee2010}, yet Corollary~\ref{cor:dual_agnostic} shows that the equivariance of Theorem~\ref{thm:star_equiv} continues to hold for it, because the barycentric dual assignment is itself $G$-equivariant.
\end{remark}

\begin{remark}
\label{rmk:averaging}
A mesh built in floating-point arithmetic realizes the symmetry group $G$ only approximately. A group element fixes the connectivity exactly, but the embedded image $g\sigma$ of a simplex differs from $\sigma$ by rounding-level perturbations. The volume ratios $|\star\sigma|/|\sigma|$ that define the discrete Hodge star (Definition~\ref{def:hodge}) are then not exactly equal across a $G$-orbit, and the isometry hypothesis of Theorem~\ref{thm:star_equiv} holds only up to that error. Averaging each ratio over the stabilizer $G_\sigma$ of $\sigma$,
\begin{equation}
    \overline{r}(\sigma) := \frac{1}{|G_\sigma|}\sum_{g \in G_\sigma} \frac{|\star(g\sigma)|}{|g\sigma|},
\end{equation}
replaces the perturbed local weights by a single symmetrized value and removes the discrepancy. The orbit-averaging construction is independent of the topological dimension of $K$ and of the choice of dual cell (circumcentric or centroid-based); it restores the equivariance relation whenever the symmetry $G$ acts only approximately at the floating-point level, at the cost of a one-time geometric preprocessing step.
\end{remark}

\subsection{Equivariance of the codifferential}

Equivariance is preserved under the algebraic operations from which the codifferential is assembled. Because $\delta_k$ is, up to an overall sign, the composition of an exterior derivative with two Hodge stars (one of them entering through its inverse), its equivariance follows from Theorems~\ref{thm:d_equiv} and~\ref{thm:star_equiv} once we observe that the inverse of an equivariant isomorphism is again equivariant.

\begin{lemma}
\label{lem:delta_equiv}
Let $G$ act on $K$ by isometries, and suppose each Hodge star $\star_j$ is invertible. Then the discrete codifferential $\delta_k:\Omega_d^k(K)\to\Omega_d^{k-1}(K)$ of~\eqref{eq:codifferential} is equivariant:
\begin{equation}
    \delta_k \circ \rho^k(g) = \rho^{k-1}(g) \circ \delta_k, \quad \forall g \in G.
\end{equation}
\end{lemma}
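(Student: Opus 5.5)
The plan is to write $\delta_k$ as the composition in~\eqref{eq:codifferential} and push $\rho^k(g)$ through it one factor at a time, using Theorems~\ref{thm:d_equiv} and~\ref{thm:star_equiv}. The sign $(-1)^{n(k-1)+1}$ is a scalar and commutes with every linear map, so it can be ignored. The argument has three ingredients, applied in the following order.

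First, I would make precise the representations on the dual side. The action $\rho^{j}$ on dual cochains $\Omega_d^{j}(\star K)$ is defined by the same pullback formula~\eqref{eq:group_action}, applied to dual cells. This makes sense because Theorem~\ref{thm:star_equiv}(i) (or Corollary~\ref{cor:dual_agnostic} for other equivariant duals) says $g$ maps dual cells to dual cells, $g(\star\sigma)=\star(g\sigma)$. The dual complex is a cell complex on which $G$ acts by cellular isomorphisms preserving incidence. Hence the proof of Theorem~\ref{thm:d_equiv} applies verbatim to the dual exterior derivative $d_{n-k}$ acting on $\Omega_d^{n-k}(\star K)$, because that proof used only $g^{-1}\partial=\partial g^{-1}$. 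This gives
\[
d_{n-k}\circ\rho^{n-k}(g)=\rho^{n-k+1}(g)\circ d_{n-k}.
\]
I expect this to be the only step needing care: the paper proves $d$-equivariance for simplicial $K$ only. The orientation of the dual cells must be compatible with the action, meaning $g$ maps the induced dual orientation of $\star\sigma$ to that of $\star(g\sigma)$ up to the same sign it applies to $\sigma$. I would argue this from the fact that the dual orientation is determined by the primal orientation together with the ambient orientation. A possible global sign from orientation-reversing isometries would then cancel between the two stars, because it appears once in $\star_k$ and once, inverted, in $\star_{k-1}^{-1}$. If the paper's conventions absorb this into $\rho$, I simply cite Theorem~\ref{thm:star_equiv}(ii) as stated.

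Second, I would prove the inversion step. If $T$ is invertible and $T\rho(g)=\rho'(g)T$ for all $g$, then multiplying on the left and on the right by $T^{-1}$ gives $\rho(g)T^{-1}=T^{-1}\rho'(g)$. Applied to $\star_{k-1}$, which is invertible by hypothesis, this yields equivariance of $\star_{k-1}^{-1}$ from $\Omega_d^{n-k+1}(\star K)$ to $\Omega_d^{k-1}(K)$.

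Finally, I would chain the three equivariances:
\[
\delta_k\rho^k(g)=\pm\star_{k-1}^{-1}d_{n-k}\star_k\rho^k(g)=\pm\star_{k-1}^{-1}d_{n-k}\rho^{n-k}(g)\star_k=\pm\star_{k-1}^{-1}\rho^{n-k+1}(g)d_{n-k}\star_k=\pm\rho^{k-1}(g)\star_{k-1}^{-1}d_{n-k}\star_k,
\]
which is $\rho^{k-1}(g)\delta_k$.
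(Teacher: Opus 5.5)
Your proposal is correct and takes essentially the same route as the paper: first show that the inverse of the equivariant isomorphism $\star_{k-1}$ is again equivariant, then chain the intertwining relations for $\star_k$, $d_{n-k}$ and $\star_{k-1}^{-1}$, and absorb the scalar sign. Your extra care goes beyond the paper's proof, which writes the middle spaces as $\Omega_d^{n-k}(K)$ and cites Theorem~\ref{thm:d_equiv} directly: you transfer that theorem to the dual cell complex, and you observe that any $\det g$ orientation sign picked up by orientation-reversing isometries cancels between $\star_k$ and $\star_{k-1}^{-1}$.
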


\begin{proof}
We first record that the inverse of an equivariant isomorphism is equivariant. By Theorem~\ref{thm:star_equiv} the star $\star_{k-1}$ intertwines $\rho^{k-1}$ and $\rho^{n-k+1}$,
\begin{equation*}
    \star_{k-1}\circ\rho^{k-1}(g) = \rho^{n-k+1}(g)\circ\star_{k-1};
\end{equation*}
composing on both sides with $\star_{k-1}^{-1}$ rearranges this to
\begin{equation*}
    \rho^{k-1}(g)\circ\star_{k-1}^{-1} = \star_{k-1}^{-1}\circ\rho^{n-k+1}(g),
\end{equation*}
so $\star_{k-1}^{-1}$ intertwines $\rho^{n-k+1}$ and $\rho^{k-1}$. The codifferential~\eqref{eq:codifferential} is the composition
\begin{equation*}
    \Omega_d^k(K) \xrightarrow{\;\star_k\;} \Omega_d^{n-k}(K) \xrightarrow{\;d_{n-k}\;} \Omega_d^{n-k+1}(K) \xrightarrow{\;\star_{k-1}^{-1}\;} \Omega_d^{k-1}(K),
\end{equation*}
scaled by the constant $(-1)^{n(k-1)+1}$. Its three factors intertwine, respectively, $\rho^k\!\to\!\rho^{n-k}$ (Theorem~\ref{thm:star_equiv}), $\rho^{n-k}\!\to\!\rho^{n-k+1}$ (Theorem~\ref{thm:d_equiv}), and $\rho^{n-k+1}\!\to\!\rho^{k-1}$ (the inversion step above). Chaining the three intertwining relations and absorbing the scalar, which commutes with every $\rho^{k-1}(g)$, yields $\delta_k\circ\rho^k(g) = \rho^{k-1}(g)\circ\delta_k$ for all $g\in G$.
\end{proof}

\begin{remark}
The hypothesis of Lemma~\ref{lem:delta_equiv} is invertibility of the participating Hodge stars, which is what the proof uses; well-centeredness (Remark~\ref{rmk:wellcentered}) is a sufficient geometric condition for it, not a necessary one. The cotangent positivity of $\star_1$ in two dimensions~\cite{bobenko2006} and the centroid-based Hodge of Section~\ref{sec:exp_3complex}, which is not well-centered, both meet this requirement by construction.
\end{remark}

\section{Universal Block-Diagonalization}
\label{sec:universal_block}

We now state the main structural result, which captures the \emph{universality} of the DEC approach to symmetry-adapted computation.

\begin{theorem}[Universal Block-Diagonalization]
\label{thm:universal}
Under the hypotheses of Theorems~\ref{thm:d_equiv} and~\ref{thm:star_equiv}, let $G$ be a finite group acting on a simplicial complex $K$ by isometries, and let $d_k$ and $\star_k$ denote the DEC exterior derivative and Hodge star, respectively. Suppose in addition that each participating Hodge star $\star_j$ is invertible. Then:
\begin{enumerate}
    \item[(i)] Any linear operator $P: \Omega_d^k(K) \to \Omega_d^\ell(K)$ that can be expressed as a composition and/or linear combination of the operators $\{d_j, \star_j, \star_j^{-1}\}$ and scalar multiplication is equivariant under $G$.
    \item[(ii)] Let $\{\phi_1, \ldots, \phi_r\}$ be the distinct irreducible representations of $G$, with $\deg \phi_i = n_i$. For each $\phi_i$, define the isotypic projection operator on $\Omega_d^k(K)$ (analogous to equation \eqref{eq:isotypicDecomposition}):
    \begin{equation}
    \label{eq:projection}
        \Pi_i^k = \frac{n_i}{|G|}\sum_{g \in G} \chi_i(g^{-1})\,\rho^k(g),
    \end{equation}
    where $\chi_i$ is the character of $\phi_i$. Then $\Omega_d^k(K)$ decomposes uniquely into isotypic components:
    \begin{equation}
        \Omega_d^k(K) = \bigoplus_{i=1}^r V_i^k, \qquad V_i^k = \Pi_i^k\,\Omega_d^k(K).
    \end{equation}
    \item[(iii)] When both domain and codomain are expressed in their respective symmetry-adapted bases, constructed via the transfer operators \eqref{eq:transferences} and ordered canonically by irreducible representation (Section~\ref{subsec:algorithm}), the matrix of $P$ strictly assumes a block-diagonal form:
    \begin{equation}
        [P]_{\text{sym}} = \bigoplus_{i=1}^r [P]_i,
    \end{equation}
    where $[P]_i$ is the exact block matrix corresponding to the $i$-th irreducible representation.
\end{enumerate}
\end{theorem}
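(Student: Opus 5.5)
Part (i) is a structural induction on the expression defining $P$. The generators are already equivariant: $d_j$ by Theorem~\ref{thm:d_equiv}, $\star_j$ by Theorem~\ref{thm:star_equiv}, and $\star_j^{-1}$ by the inversion argument in the proof of Lemma~\ref{lem:delta_equiv}. Scalar multiples of the identity commute with every $\rho^k(g)$. It then suffices to show that equivariance is closed under the two operations that build $P$.

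For composition, suppose $A\circ\rho^a(g)=\rho^b(g)\circ A$ and $B\circ\rho^b(g)=\rho^c(g)\circ B$. Chaining gives $(B\circ A)\circ\rho^a(g)=\rho^c(g)\circ(B\circ A)$.

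For linear combinations, take $A,B$ with the same source and target. The identity $(\lambda A+\mu B)\circ\rho(g)=\rho(g)\circ(\lambda A+\mu B)$ follows from linearity of $\rho(g)$.

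A small bookkeeping point concerns dual cochains. I would identify $\Omega_d^{n-k}(\star K)$ with a representation via the same pullback rule~\eqref{eq:group_action}, applied through Theorem~\ref{thm:star_equiv}(i). Under this identification every intermediate space in the expression carries a $G$-representation, and the chain of intertwinings is well typed.

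Part (ii) is standard character theory, taken from Section~\ref{sec:finding_symmetry_adapted_basis}. The main work is checking that the $\Pi_i^k$ are complementary idempotents. I would use the orthogonality relations for irreducible characters to show $\Pi_i^k\Pi_j^k=\delta_{ij}\Pi_i^k$ and $\sum_i\Pi_i^k=I$. I would also show that $\Pi_i^k$ commutes with each $\rho^k(h)$, because $\chi_i$ is a class function. The direct-sum decomposition follows from these identities. Uniqueness holds because $V_i^k$ is intrinsically the sum of all subrepresentations isomorphic to $\phi_i$, so it does not depend on the chosen irreducible splitting.

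Since $\rho^k(g)$ is a signed permutation matrix and hence orthogonal, each $\Pi_i^k$ is an orthogonal projector. Concretely, $\Pi_i^{k\,\mathsf T}=\Pi_i^k$, using $\overline{\chi_i(g)}=\chi_i(g^{-1})$.

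For part (iii), the key step is the identity $\Pi_i^\ell P=P\,\Pi_i^k$. This follows directly from (i) by substituting the equivariance relation term by term into the sum~\eqref{eq:projection}. Consequently $P(V_i^k)\subseteq V_i^\ell$, and $P$ is zero between different isotypic components. Ordering both symmetry-adapted bases by irrep index then makes $[P]_{\mathrm{sym}}$ block diagonal. This is also exactly Lemma~\ref{lem:lemma_block_zero}(i), since $\mathcal T_{ij}=0$ for inequivalent constituents. Applying Lemma~\ref{lem:lemma_block_zero}(ii) inside each block further gives the repeated $n_i$-fold structure $[P]_i\cong I_{n_i}\otimes\mathfrak t_i$, when the bases are built by the transfer operators~\eqref{eq:transferences}.

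The main obstacle is the field of scalars. The representations are real, but $\chi_i$ and the transfer matrices $d^i_{pq}$ may be complex for some irreps. Over $\mathbb R$ one must then group complex-conjugate irreps into real isotypic components, which keeps the block-diagonal statement valid with real orthogonal $Q$. For the groups used here ($I_h$, $D_{6h}$, $T_d$) all irreps are real, so I would state the result over $\mathbb C$ and remark that it holds over $\mathbb R$ whenever the irreps are real-realizable, as in these cases.
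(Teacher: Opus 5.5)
Your proof is correct. Parts (i) and (ii) follow the paper's proof, with details filled in that the paper only asserts: closure of intertwiners under composition, linear combination and inversion of $\star_j$; the $\Pi_i^k$ as a complete family of orthogonal idempotents, checked via the orthogonality relations; and the intrinsic characterization of $V_i^k$ that gives uniqueness.

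Part (iii) is where you genuinely diverge. The paper gets the vanishing of the off-diagonal blocks from Schur's lemma through Lemma~\ref{lem:lemma_block_zero}, by examining the restrictions of $P$ between irreducible constituents. Your main argument is instead the identity $\Pi_i^\ell P = P\,\Pi_i^k$. It holds simply because $\Pi_i$ lies in the span of the $\rho(g)$, and it gives $\Pi_j^\ell P\,\Pi_i^k = P\,\Pi_j^k\Pi_i^k = 0$ for $i\neq j$ at once.

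Your route needs neither Schur's lemma nor an irreducible splitting. It shows that block-diagonality holds in \emph{any} basis adapted to the isotypic decomposition, so the transfer-operator construction plays no role at that level. This is in fact the argument that justifies the basis the paper actually computes in Section~\ref{subsec:algorithm}, where $Q_k$ comes from eigenvectors of $\Pi_i^k$ rather than from transfer operators.

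What the Schur route buys is the finer structure inside each block: the $n_i$ identical copies of $\mathfrak{t}_{ii}$ that the paper later exploits for the multiplicity reduction. You obtain this by also citing Lemma~\ref{lem:lemma_block_zero}(ii). The same commutation trick applied to the transfer operators $P^i_{1m}$ would prove it directly, provided the same irrep matrices are used in domain and codomain.

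Two small points. First, $\overline{\chi_i(g)}=\chi_i(g^{-1})$ gives self-adjointness of $\Pi_i^k$ for the conjugate transpose. Plain symmetry needs real characters, which is the case for $I_h$, $D_{6h}$ and $T_d$. Real characters alone already make the isotypic splitting work over $\mathbb{R}$, since $\Pi_i^k$ is then a real matrix. Real-realizability of the irreps is needed only for the transfer-operator basis.

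Second, in your dual-cochain bookkeeping, define the action by $g\cdot\star\sigma := \star(g\sigma)$, with $\star$ extended linearly so that $\star\sigma$ inherits the sign of $g\sigma$. If you instead push dual-cell orientations forward geometrically, the action picks up an extra sign according to whether $g$ preserves the orientation of $K$ (that is, $\det g$ for all the meshes here). Under that convention $\star_k$ intertwines only up to this sign character. This is harmless for primal-to-primal $P$: every such composite contains an even number of factors $\star_j^{\pm1}$, so the signs cancel. Still, the first convention is the one under which Theorem~\ref{thm:star_equiv}(ii) holds as stated.
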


\begin{proof}
Part~(i) follows from the fact that equivariant operators are closed under composition, linear combination, and scalar multiplication, and that the inverse of an equivariant isomorphism is again equivariant, as established in the proof of Lemma~\ref{lem:delta_equiv}. Since $d_k$ (Theorem~\ref{thm:d_equiv}) and $\star_k$ (Theorem~\ref{thm:star_equiv}) are equivariant and each $\star_j$ is invertible by hypothesis, any algebraic expression built from $\{d_j, \star_j, \star_j^{-1}\}$ is equivariant.

Part~(ii) follows directly from the character properties reviewed in Section~\ref{sec:finding_symmetry_adapted_basis}: the operators $\Pi_i^k$ form a complete set of orthogonal idempotents summing to the identity, and each $V_i^k$ expands as a direct sum of equivalent copies of $\phi_i$.

Part~(iii) translates the global constraint of Schur's Lemma (Theorem~\ref{thm:lema_de_schur}) onto the adapted bases. An equivariant operator restricts within matched isotypic components, forcing the cross-component blocks to vanish (Lemma~\ref{lem:lemma_block_zero}).
\end{proof}

\begin{remark}
The symmetry-adapted basis for the cochain space $\Omega_d^k(K)$ is determined solely by the group action on $K$ and is independent of the specific PDE being solved. Once computed, it simultaneously block-diagonalizes:
\begin{itemize}
    \item the scalar Laplacian $\Delta_0 = \delta_1 d_0$,
    \item the vector Laplacian $\Delta_1 = \delta_2 d_1 + d_0 \delta_1$,
    \item Maxwell-type operators,
    \item elasticity operators,
    \item any other operator assembled from DEC building blocks.
\end{itemize}
This contrasts with approaches (such as \cite{olver2025}) where equivariance must be verified and the symmetry-adapted decomposition must be constructed from scratch for each specific differential operator.
\end{remark}

\begin{remark}[Anisotropic data and boundary conditions]
\label{rmk:anisotropic_data}
The block-diagonalization of Theorem~\ref{thm:universal} is a property of the \emph{operator}, not of the data~\cite{bossavit1993}. When the mesh geometry and operator coefficients are $G$-equivariant, the system matrix is block-diagonal in the symmetry-adapted basis irrespective of the symmetry of the right-hand side or the boundary conditions. Inhomogeneous Dirichlet conditions enter through a lifting function and inhomogeneous Neumann conditions through the load vector; the load vector, however anisotropic, is projected onto the isotypic blocks, which are then solved independently. The construction mirrors the spectral setting of~\cite[Sec.~5]{olver2025}, where the operator coefficients are group-invariant while the load vector and the solution carry no symmetry.
\end{remark}

\begin{remark}[Nonlinear equations and operator splitting]
\label{rmk:nonlinear}
Theorem~\ref{thm:universal} applies to linear operators assembled from $d$ and~$\star$. For nonlinear PDEs such as the Navier--Stokes equations, the framework remains applicable within operator-splitting (fractional-step) time integrators~\cite{guermond2006}. The dominant implicit substeps, typically a viscous diffusion solve (Helmholtz equation) and a pressure-projection solve (Poisson equation), are linear and, provided the viscosity and geometry respect~$G$, equivariant. Assembled from $d$ and~$\star$~\cite{elcott2007}, they are block-diagonalized by the precomputed matrices~$Q_k$ at every time step, whereas the nonlinear terms depend on the generally asymmetric solution field and are evaluated in the physical basis. The method isolates rather than eliminates the nonlinear cost; the implicit solves are accelerated while the typically subdominant explicit evaluation is unchanged. The same holds for any time integrator that treats stiff linear terms implicitly and nonlinear terms explicitly, provided the implicit part is assembled from equivariant DEC operators~\cite[Sec.~7]{olver2025}.
\end{remark}

\subsection{Algorithmic construction of symmetry-adapted bases}
\label{subsec:algorithm}

The practical construction
of the symmetry-adapted basis $Q_k$ for meshes with thousands of simplices requires
a numerically stable procedure.
General-purpose software frameworks such as \texttt{PySymmetry}~\cite{pysymmetry2025}
can facilitate this task. Given a finite group $G$, they automatically enumerate
its conjugacy classes, compute the full character table, and determine all irreducible
representations, thereby providing the exact algebraic input required by the projection
formula~\eqref{eq:projection} without manual derivation.
The results reported in this paper were produced by a lightweight numerical extraction
pipeline, developed for this work and tailored to the point groups of our target curved manifolds (the icosahedral
group $I_h$ for spherical meshes and the point group $D_{6h}$ for toroidal grids),
which applies the character values to the large sparse representation matrices of the
mesh and extracts the symmetry-adapted basis via spectral decomposition and QR factorization.
We now describe this pipeline, which realizes the block-diagonal structure of
Theorem~\ref{thm:universal} for any finite symmetry group $G$.

\begin{itemize}

\vspace{0.15em}

\item \noindent\textit{Step 1: Group enumeration.}
Given a set of geometric generators of $G$ (e.g., rotations about symmetry axes,
reflections, or inversions), all $|G|$ group elements are enumerated by breadth-first search (BFS).
Each element $g$ is stored as a pair $(\pi_v, M)$, where $M \in O(N)$ is the $N \times N$
orthogonal matrix acting on $\mathbb{R}^N$ and $\pi_v \in S_{N_0}$ is the induced permutation
of the vertex set.  The permutation array $\pi_v$ (an integer vector) serves as the
unique identifier for $g$, avoiding floating-point comparisons entirely.

\vspace{0.15em}

\item \noindent\textit{Step 2: Representation matrices on $k$-cochains.}
For each group element $g \in G$, the linear representation
$\rho^k(g) \in \GL(\Omega_d^k(K))$ defined by~\eqref{eq:group_action}
is assembled as a signed permutation matrix.
For $0$-forms (vertices), $\rho^0(g)$ is the standard permutation matrix induced by $\pi_v$.
For $k$-forms with $k \geq 1$, the action on an oriented $k$-simplex $\sigma = [v_0, \ldots, v_k]$
maps it to $g\sigma = [g(v_0), \ldots, g(v_k)]$; if the resulting simplex requires
reordering to match the canonical orientation, the matrix entry acquires a sign factor
$\sgn(g,\sigma) = \pm 1$.  Thus $\rho^k(g)$ has exactly $N_k$ nonzero entries, each equal to $\pm 1$.

\vspace{0.15em}

\item \noindent\textit{Step 3: Isotypic projection.}
For each irreducible representation $\phi_i$ with character $\chi_i$ and degree $n_i$,
the isotypic projector $\Pi_i^k$ onto the component $V_i^k$ is computed via the
character projection formula~\eqref{eq:projection}:
\[
  \Pi_i^k \;=\; \frac{n_i}{|G|} \sum_{g \in G} \chi_i(g^{-1})\,\rho^k(g).
\]
Since every $\rho^k(g)$ is a signed permutation (with $N_k$ nonzero entries),
accumulating the sum costs $\mathcal{O}(|G| \cdot N_k)$ arithmetic operations per irrep (,
yielding a dense symmetric matrix $\Pi_i^k \in \mathbb{R}^{N_k \times N_k}$
whose eigenvalues lie in $\{0,1\}$.
In practice, since characters are class functions, the sum over $|G|$ elements
collapses to a sum over the $N_{\mathrm{cl}}$ conjugacy classes of $G$,
reducing the cost to $\mathcal{O}(N_{\mathrm{cl}} \cdot N_k)$ per irrep.

\vspace{0.15em}

\item \noindent\textit{Step 4: Orthonormal basis extraction.}
An orthonormal basis for each $V_i^k = \operatorname{Im}(\Pi_i^k)$ is obtained by
spectral decomposition of the symmetric projector $\Pi_i^k$: retaining the eigenvectors
associated with the unit eigenvalue yields a basis of dimension
$\dim V_i^k = c_i \cdot n_i$, where $c_i$ is the multiplicity computed via
the character inner product~\eqref{eq:Produto_interno_dos_Carateres}.
A subsequent QR factorization restores orthonormality to machine precision, correcting floating-point round-off from the eigendecomposition. In exact arithmetic the isotypic construction is already orthonormal, so this QR step is a finite-precision safeguard rather than an algebraic requirement.
This step dominates the overall cost at $\mathcal{O}(N_k^3)$
(via symmetric eigendecomposition), but is performed \emph{once} as a preprocessing
step and amortized over all operators and right-hand sides.

\vspace{0.15em}

\item \noindent\textit{Step 5: Assembly of the change-of-basis matrix.}
The global orthogonal matrix $Q_k$ is formed by concatenating the orthonormal
bases in the canonical order of the irreducible representations:
\[
  Q_k \;=\; \bigl[\, \tilde{Q}_1^k \;\big|\; \tilde{Q}_2^k \;\big|\; \cdots \;\big|\; \tilde{Q}_r^k \,\bigr],
\]
where $\tilde{Q}_i^k \in \mathbb{R}^{N_k \times \dim V_i^k}$ and $r$ is the number of
distinct irreps.  By construction $Q_k^\top Q_k = I_{N_k}$, so the change of basis
is orthogonal and numerically stable.  The column ordering assurres the order of symmetry adapted basis.
For any equivariant operator $P$, the transformed matrix
$Q_k^\top P\, Q_k$ is block-diagonal with blocks indexed by the irreps of $G$.
\end{itemize}
\paragraph{Group-specific considerations.}
While the algorithm above applies to any finite symmetry group, the choice
of generators and the classification of group elements into conjugacy
classes are group-dependent.
For the icosahedral group $I_h$ (order $120$, $10$ irreps of dimensions
$1,3,3,4,5$ in gerade/ungerade pairs), the generators are a $72^\circ$ rotation
about a $5$-fold axis, a $120^\circ$ rotation about a $3$-fold axis, and the
spatial inversion.
For the point group $D_{6h}$ (order $24$, $12$ irreps of dimensions $1$ or $2$),
the generators are a $60^\circ$ rotation about the principal axis, a $180^\circ$
rotation about a perpendicular axis, and the horizontal mirror reflection.
For the tetrahedral point group $T_d$ (order $24$, $5$ irreps of
dimensions $1,1,2,3,3$, namely $A_1,A_2,E,T_1,T_2$), the
group-enumeration step (Step~1 of the algorithm in
Section~\ref{subsec:algorithm}) admits a combinatorial shortcut via the
isomorphism $T_d \cong S_4$: the $24$ elements are enumerated directly as
signed permutations of $(x,y,z)$ with product of signs equal to $+1$,
bypassing the BFS but producing the same set of orthogonal matrices
required by Steps~2--5.
In all three cases, conjugacy-class membership is determined from the trace and
determinant of the orthogonal matrix $M$, which suffices to evaluate the characters
$\chi_i(g)$ appearing in the projection formula.
All characters of $I_h$, $D_{6h}$ and $T_d$ are real-valued, so $\chi_i(g^{-1}) = \chi_i(g)$,
and the projectors $\Pi_i^k$ are symmetric.

\subsection{Computational complexity reduction}
\label{subsec:complexity}

Let $N_k = \dim \Omega_d^k(K)$ be the total number of $k$-simplices. Without symmetry adaptation, a dense direct factorization of $\Delta_k$ costs $\Theta(N_k^3)$ operations. Because $\Delta_k$ carries only $O(N_k)$ nonzeros, a sparse direct or iterative solver, the baseline a practitioner would use, can reduce this substantially; the FLOP counts reported here are stated against the dense factorization, the like-for-like comparator for the dense isotypic blocks. In the symmetry-adapted basis, the dense factorization cost reduces to
\begin{equation}
    \sum_{i=1}^r \Theta\bigl(\dim(V_i^k)^3\bigr).
\end{equation}
Since $\sum_i \dim V_i^k = N_k$ and the cost is dominated by the largest block, the reduction is greatest when the group $G$ splits the space into many components of comparable dimension. Moreover, the $r$ independent blocks can be factorized in parallel, yielding an embarrassingly parallel algorithm. The block-diagonalization is orthogonal to sparsity. It decouples the system by symmetry regardless of how each block is subsequently factored, and therefore composes with a sparse per-block solver rather than competing with it.

For example, consider the full icosahedral group $I_h$ with $|G| = 120$ and 10 irreducible representations of dimensions $d_\mu \in \{1, 3, 3, 4, 5\}$ (gerade and ungerade, satisfying $\sum d_\mu^2 = 120$). For the 1-form Laplacian $\Delta_1$ on the geodesic icosphere at subdivision level $n=4$, the cochain space has dimension $N_1 = 480$ and the ten isotypic blocks have dimensions $4, 36, 36, 64, 100, 4, 36, 36, 64, 100$. The sequential direct-solve cost reduces by a factor of
\begin{equation}
    \frac{N_1^3}{\sum_\mu (\dim V_\mu^1)^3} = \frac{480^3}{2(4^3 + 36^3 + 36^3 + 64^3 + 100^3)} \approx 41.
\end{equation}
When the ten blocks are factorized in parallel, the wall-clock time is dominated by the largest block (dimension $100$), yielding a parallel speedup of $480^3 / 100^3 \approx 110$. This worked example uses the 1-form Laplacian $\Delta_1$. The speedups measured in Section~\ref{sec:experiments} (Tables~\ref{tab:performance_sphere}--\ref{tab:performance_torus}) are instead for the 0-form Laplacian $\Delta_0$, whose smaller cochain space and different block distribution give the distinct ratios reported there.

This isotypic reduction is the first of two levels. By Schur's lemma the equivariant operator restricted to each isotypic component $V_\mu^k$ acts as $M_\mu \otimes I_{d_\mu}$, so the block of dimension $c_\mu d_\mu$ is $d_\mu$ identical copies of a single $c_\mu \times c_\mu$ matrix $M_\mu$, with $c_\mu = \dim V_\mu^k / d_\mu$ the multiplicity. Factorizing one copy per irrep replaces $\sum_\mu (\dim V_\mu^k)^3$ by $\sum_\mu c_\mu^3$; for the same $\Delta_1$ example the sequential factor becomes $480^3 / [\,2(4^3 + 12^3 + 12^3 + 16^3 + 20^3)\,] \approx 3541$. In general the sequential and parallel asymptotes rise from $|G|^3/\sum_\rho d_\rho^6$ and $|G|^3/d_{\max}^6$ to $|G|^3/\sum_\rho d_\rho^3$ and $|G|^3/d_{\max}^3$, a further factor of order $|G|$. This deeper reduction is standard in computational group theory; we record it here because it applies verbatim to the DEC operators of Theorem~\ref{thm:universal}.

\section{Numerical Experiments}
\label{sec:experiments}

We present a sequence of numerical experiments that validate the theoretical framework and demonstrate its computational advantages. Experiments~1--3 are carried out on two curved test surfaces chosen to span different topologies, curvature profiles, and symmetry groups: a geodesic icosahedral sphere ($S^2$, genus~0, point group $I_h$ of order 120 with 10 irreps) and a torus of revolution ($T^2$, genus~1, point group $D_{6h}$ of order 24 with 12 irreps). Experiment~4 (Section~\ref{sec:exp_3complex}) extends the verification to a three-dimensional tessellation, the flat 3-torus $T^3$ under the tetrahedral group $T_d$. We begin by documenting the verification tests performed on the discrete implementation.

\subsection{Verification of the discrete implementation}
\label{sec:verification}

The verification tests in this subsection are performed on a geodesic icosphere at subdivision level $n=3$, consisting of $N_0 = 92$ vertices, $N_1 = 270$ edges, and $N_2 = 180$ faces. The implementation comprises three independent modules: mesh generation, DEC operator assembly, and symmetry-adapted basis construction. Each is verified by a dedicated test suite, with results summarized in Table~\ref{tab:verification}.

Standard topological and operator identities ($\chi = 2$, $\partial_1\partial_2 = 0$, primal--dual orthogonality, self-adjointness of $\star_k\Delta_k$, de~Rham cohomology dimensions $\dim\ker\Delta_0 = 1$ and $\dim\ker\Delta_1 = 0$, basis orthogonality $\|Q_k^\top Q_k - I\| < 2\times 10^{-15}$, and completeness $\sum_\mu \dim V_\mu^k = N_k$) all hold to machine precision and serve as software self-checks. For an operator $A:\Omega_d^k(K)\to\Omega_d^m(K)$ we write $[\rho(g),A] := \rho^m(g)\,A - A\,\rho^k(g)$ for its equivariance defect, which reduces to an ordinary commutator when $m=k$; equivariance of $A$ is the vanishing of this defect for all $g\in G$. The load-bearing residuals are summarized in Table~\ref{tab:verification}: equivariance of $d_0$ and $\star_1$ under all 120 elements of $I_h$ holds at machine precision (Theorems~\ref{thm:d_equiv}, \ref{thm:star_equiv}), and the off-block-diagonal residuals of $\Delta_0$ and $\Delta_1$ in the symmetry-adapted basis are below $10^{-15}$, confirming exact block-diagonalization (Theorem~\ref{thm:universal}).

A stronger universality test goes beyond block-diagonalizing the operators used to build $Q_0$. Stripping the Hodge stars from $\Delta_0 = \star_0^{-1} d_0^\top \star_1 d_0$ yields the metric-free Laplacian $d_0^\top d_0$, a structurally distinct operator that does not appear anywhere in the construction of $Q_0$; nevertheless, the same $Q_0$ block-diagonalizes it with residual $6.3\times 10^{-16}$. This confirms that the symmetry-adapted basis is a property of the group action on the mesh, not of any particular operator built from it.

\begin{table}[h]
\centering
\caption{Load-bearing verification residuals for the geodesic icosphere at $n=3$ ($N_0=92$, $N_1=270$, $N_2=180$, $|I_h|=120$). Standard topological and operator identities (Euler characteristic, boundary identity, primal--dual orthogonality, self-adjointness, de~Rham cohomology, basis orthogonality and completeness) all hold to machine precision $\varepsilon_{\mathrm{mach}} \approx 2.2\times10^{-16}$ and are omitted here.}
\label{tab:verification}
\begin{tabular}{llc}
\toprule
\textbf{Test} & \textbf{Quantity} & \textbf{Residual} \\
\midrule
Equivariance of $d_0$ under $I_h$         & $\max_g \|[\rho^1(g), d_0]\|$            & $0$ \\
Equivariance of $\star_1$ under $I_h$     & $\max_g \|[\rho^1(g), \star_1]\|$        & $5.9\times 10^{-15}$ \\
Off-block residual of $\Delta_0$          & $\|Q_0^\top \Delta_0 Q_0 - \mathrm{blockdiag}\|$ & $6.6\times 10^{-16}$ \\
Off-block residual of $\Delta_1$          & $\|Q_1^\top \Delta_1 Q_1 - \mathrm{blockdiag}\|$ & $9.6\times 10^{-16}$ \\
Universality ($d_0^\top d_0$, no Hodge)   & $\|Q_0^\top (d_0^\top d_0) Q_0 - \mathrm{blockdiag}\|$ & $6.3\times 10^{-16}$ \\
\bottomrule
\end{tabular}
\end{table}

\subsection{Experiment 1: Block-diagonalization of DEC building blocks}

This experiment verifies, on two geometrically distinct surfaces, that the DEC building blocks are individually block-diagonal in the symmetry-adapted basis and that the composite Hodge Laplacian $\Delta_1$ inherits this structure by algebraic composition.

\paragraph{Icosahedral sphere.}
We discretize the 2-sphere $S^2$ using an icosahedral geodesic mesh at subdivision level $n=4$, obtained by subdividing each face of the regular icosahedron into $n^2$ triangles and projecting the resulting vertices onto the unit sphere. The mesh consists of $N_0 = 162$ vertices, $N_1 = 480$ edges, and $N_2 = 320$ faces.

This triangulation is invariant under the full icosahedral group $I_h$ of order $|I_h|=120$, which contains the 60 proper rotations of $I$ and their 60 improper counterparts (compositions with the spatial inversion $i$). The 120 elements are enumerated from a standard generator set (5-fold, 3-fold, inversion).

The symmetry-adapted bases $Q_0$, $Q_1$, $Q_2$ for the cochain spaces $\Omega_d^0$, $\Omega_d^1$, $\Omega_d^2$ are constructed via the character-theoretic projection operators~\eqref{eq:projection}. Since $I_h = I \times \{E, i\}$, its ten irreducible representations are obtained by pairing each irrep of $I$ with the two one-dimensional representations of $\mathbb{Z}_2$: the five gerade (even) irreps $A_g$, $T_{1g}$, $T_{2g}$, $G_g$, $H_g$ and the five ungerade (odd) irreps $A_u$, $T_{1u}$, $T_{2u}$, $G_u$, $H_u$, all of dimensions $1, 3, 3, 4, 5$ respectively (with $2(1^2+3^2+3^2+4^2+5^2) = 120 = |I_h|$).

Rather than block-diagonalizing the composite operator $\Delta_1$ directly, we first verify that the DEC building blocks are individually block-diagonal in the symmetry-adapted basis. The relative off-block residuals of $d_0$ and $d_1$ are $6.9\times10^{-16}$ and $8.6\times10^{-16}$ respectively, confirming equivariance to machine precision (Theorem~\ref{thm:d_equiv}). The Hodge stars, being diagonal matrices on a symmetric mesh, satisfy the same property by construction ($\star_k$ commutes with $\rho^k(g)$ because symmetry-related simplices have equal metric volumes; residuals $\leq 9.4\times10^{-16}$ for all three).

Since $\Delta_1 = \delta_2 d_1 + d_0 \delta_1$ where $\delta_k = \star_{k-1}^{-1}\, d_{k-1}^\top\, \star_k$, the Hodge Laplacian is composed entirely from the block-diagonalized building blocks. We assemble $\Delta_1$ in the symmetry-adapted basis by composing the transformed operators:
\begin{equation*}
    [\Delta_1]_{\mathrm{sym}} = \tilde{\star}_1^{-1} \tilde{d}_1^\top \tilde{\star}_2\, \tilde{d}_1 \;+\; \tilde{d}_0\, \tilde{\star}_0^{-1} \tilde{d}_0^\top \tilde{\star}_1,
\end{equation*}
where $\tilde{d}_k = Q_{k+1}^\top d_k\, Q_k$ and $\tilde{\star}_k = Q_k^\top \star_k\, Q_k$. The resulting matrix is block-diagonal by construction, with off-block residual $1.7\times10^{-15}$, and agrees with the direct computation $Q_1^\top \Delta_1 Q_1$ to relative error $2.0\times10^{-15}$.

\paragraph{Torus with $D_{6h}$ symmetry.}
To demonstrate that the framework generalises beyond the sphere, we construct a torus of revolution $T^2$ embedded in $\mathbb{R}^3$ via the standard parametrization $\mathbf{r}(u,v) = ((R + r\cos v)\cos u,\; (R + r\cos v)\sin u,\; r\sin v)$ with major radius $R=3$ and minor radius $r=1$. This embedded torus carries a non-constant Gaussian curvature $K(v) = \cos v\,/\,[r(R + r\cos v)]$ that is positive on the outer rim ($v=0$), negative on the inner rim ($v=\pi$), and vanishes on the top and bottom parallels ($v=\pm\pi/2$)~\cite{docarmo2016}; it is therefore a curved test surface, distinct from the intrinsically flat $3$-torus of Section~\ref{sec:exp_3complex}. The triangulation uses a hexagonal (staggered) lattice with $36 \times 12 = 432$ vertices, $1296$ edges, and $864$ faces, yielding Euler characteristic $\chi = 432 - 1296 + 864 = 0$ as expected for genus~1. The mesh is invariant under the point group $D_{6h}$ of order $|D_{6h}|=24$, generated by a $60^\circ$ rotation $C_6$ about the torus axis, a $180^\circ$ rotation $C_2'$ about a radial axis, and the horizontal mirror $\sigma_h: z \mapsto -z$. The group has 12 conjugacy classes and 12 irreducible representations: four one-dimensional ($A_{1g}$, $A_{2g}$, $B_{1g}$, $B_{2g}$) and two two-dimensional ($E_{1g}$, $E_{2g}$) gerade irreps, plus their ungerade counterparts ($4 \cdot 1^2 + 2 \cdot 2^2 = 12$, and $2 \cdot 12 = 24 = |D_{6h}|$).

The choice of a hexagonal lattice, in which odd poloidal rows are staggered by half a grid step in the toroidal direction, is motivated by a numerical obstruction that arises with quad-based triangulations. When a quadrilateral cell is split into two triangles by a single diagonal, the two triangles sharing that diagonal form a parallelogram whose opposite angles sum to $\pi$. The cotangent formula for the discrete Hodge star on 1-forms, $\star_1(e) = (\cot\alpha_e + \cot\beta_e)/2$, then yields $\star_1(e) = 0$ on every diagonal edge---the cocircular, degenerate-Delaunay case, in which the opposite angles across the diagonal meet the local Delaunay bound $\alpha_e + \beta_e \leq \pi$ with equality, so the cotangent weight sits at the boundary of positivity \cite{bobenko2006,wardetzky2007}. Since the composition of $\Delta_1$ from building blocks requires the inversion of $\star_1$, vanishing diagonal entries render this composition numerically unstable. The hexagonal lattice avoids this issue entirely. All triangles are approximately equilateral, all cotangent weights are strictly positive, and $\star_1$ is everywhere invertible (see Section~\ref{sec:discussion} for further discussion).

On the torus, the same procedure is applied with $Q_0$, $Q_1$, $Q_2$ constructed from the $D_{6h}$ character table. The relative off-block residuals are $8.8\times10^{-16}$ for $d_0$ and $2.9\times10^{-15}$ for $\star_1$, confirming equivariance at machine precision on a surface with fundamentally different topology and curvature. The composed $\Delta_1$ has off-block residual $3.1\times10^{-15}$ and agrees with the direct computation to relative error $1.9\times10^{-15}$. The kernel of $\Delta_1$ on the torus is two-dimensional, reflecting $H^1(T^2) \cong \mathbb{R}^2$ (two independent non-contractible cycles), in contrast to the trivial $H^1(S^2) = 0$ on the sphere.

The predicted structure appears on both surfaces (Figure~\ref{fig:experiment1}). On the sphere, $Q_1^\top d_0\, Q_0$ takes the rectangular block-diagonal form that connects the isotypic components of $\Omega_d^0$ and $\Omega_d^1$, and the $\Delta_1$ composed from the building blocks exhibits ten square diagonal blocks. The torus follows the analogous construction, with 12 blocks corresponding to the $D_{6h}$ irreps. Classifying the eigenvalues by irrep (Figure~\ref{fig:experiment1_spectra}) reveals degeneracies $d_\mu \in \{1,3,3,4,5\}$ per parity sector on the sphere and $d_\mu \in \{1,1,1,1,2,2\}$ per parity sector on the torus. The consistency of the block-diagonal structure across two surfaces with different topology (genus 0 vs.~1), curvature (constant positive vs.~sign-changing Gaussian curvature), and symmetry group ($I_h$ vs.~$D_{6h}$) validates the framework.

\begin{figure}[H]
    \centering
    \includegraphics[width=\linewidth]{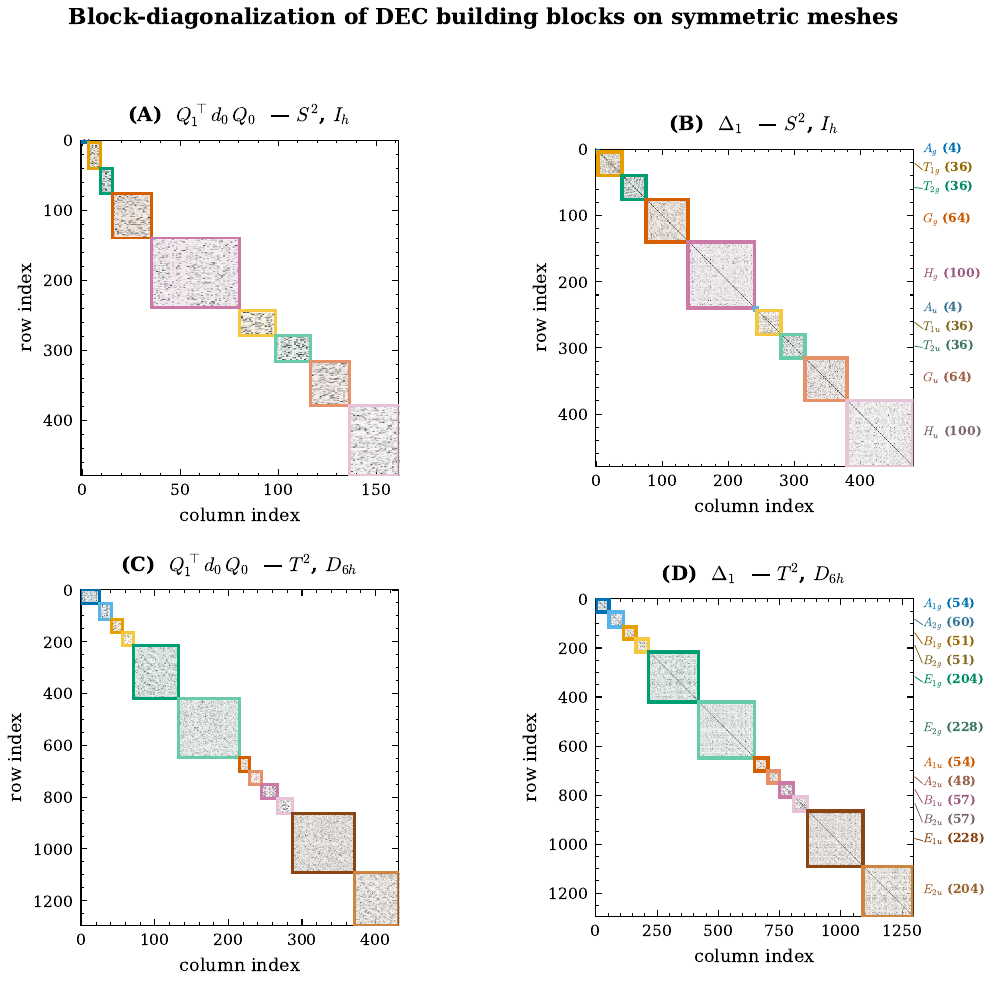}
    \caption{Block-diagonalization of the DEC building blocks on two symmetric surfaces. Panels~\textbf{(A)} and~\textbf{(B)} treat the icosahedral sphere $S^2$ at subdivision level $n=4$ ($N_1=480$ edges, $|I_h|=120$); panels~\textbf{(C)} and~\textbf{(D)} treat the torus $T^2$ on a $36\times12$ hexagonal lattice ($N_1=1296$, $|D_{6h}|=24$). \textbf{(A,\,C)}~The exterior derivative $Q_1^\top d_0\, Q_0$ in the symmetry-adapted basis, whose rectangular blocks connect the isotypic components of $\Omega_d^0$ and $\Omega_d^1$. \textbf{(B,\,D)}~The Hodge Laplacian $\Delta_1$ assembled from the transformed building blocks, with ten square diagonal blocks on the sphere and twelve on the torus. In every panel the horizontal axis is the column index and the vertical axis the row index, grey intensity encodes the magnitude of the matrix entry, and each block carries an outline and a light tint in a colour keyed to its irrep. The right-margin labels of~(B) name each block by irrep and size, reading $A_g$~(4), $T_{1g}$~(36), $T_{2g}$~(36), $G_g$~(64), $H_g$~(100) through the gerade sector and $A_u$~(4), $T_{1u}$~(36), $T_{2u}$~(36), $G_u$~(64), $H_u$~(100) through the ungerade sector; those of~(D) read $A_{1g}$~(54), $A_{2g}$~(60), $B_{1g}$~(51), $B_{2g}$~(51), $E_{1g}$~(204), $E_{2g}$~(228) and then $A_{1u}$~(54), $A_{2u}$~(48), $B_{1u}$~(57), $B_{2u}$~(57), $E_{1u}$~(228), $E_{2u}$~(204). A short leader line joins any label displaced to avoid overprinting back to the block it annotates. Entries outside the outlined blocks do not vanish identically but stay at relative magnitude $\sim 10^{-15}$, so the block-diagonalization is exact to machine precision.}
    \label{fig:experiment1}
\end{figure}

\begin{figure}[H]
    \centering
    \includegraphics[width=\linewidth]{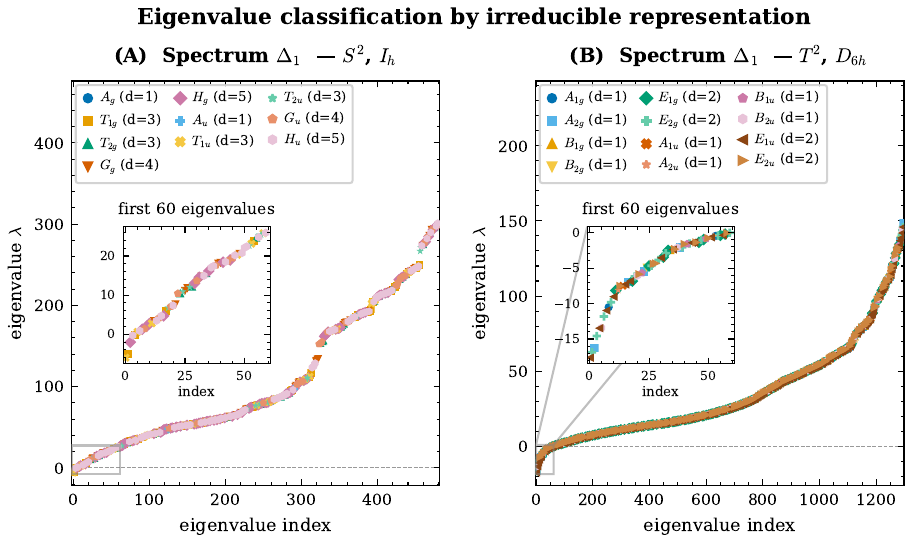}
    \caption{Classification of the $\Delta_1$ eigenvalues by irreducible representation, on the two meshes of Figure~\ref{fig:experiment1}. \textbf{(A)}~Icosahedral sphere $S^2$ under $I_h$. \textbf{(B)}~Torus $T^2$ under $D_{6h}$. Each panel plots the eigenvalue $\lambda$ against its index in ascending order. Every eigenvalue is drawn as a marker whose colour and shape both record the irrep of the block that produced it, so that the series stay separable in greyscale; the legend of each panel lists the irreps with their dimension, given as~$d$. The dashed horizontal line marks $\lambda=0$. An inset in each panel magnifies the first 60 eigenvalues, the grey rectangle in the main axes delimits the magnified window, and grey connector lines join that window to the inset. Every eigenvalue carries a definite irrep label, and the degeneracies are the ones Schur's lemma dictates, $d_\mu$-fold for the irrep~$\mu$, with $d_\mu \in \{1,3,3,4,5\}$ per parity sector on the sphere and $d_\mu \in \{1,1,1,1,2,2\}$ per parity sector on the torus.}
    \label{fig:experiment1_spectra}
\end{figure}

\subsection{Experiment 2: Eigenfunction classification by irreducible representation}

The block-diagonal structure established in Experiment~1 has a direct spectral consequence. Every eigenfunction of an equivariant operator belongs to exactly one irreducible representation of~$I_h$. Rather than computing the full spectrum and classifying eigenvectors \emph{a~posteriori}, the symmetry-adapted basis provides this classification for free---one simply diagonalises each reduced block independently.

We apply this procedure to the scalar Laplacian $\Delta_0 = \delta_1 d_0$ on the $n=4$ icosahedral mesh ($N_0 = 162$ vertices). The change of basis $Q_0^\top \Delta_0 Q_0$ yields ten diagonal blocks (one per irrep of $I_h$), and diagonalising each block produces eigenfunctions that are automatically labelled by their symmetry class. Figure~\ref{fig:experiment2} displays the lowest non-trivial eigenfunction from each irrep, rendered as a scalar field on the sphere.

The gerade eigenfunctions (top row) are invariant under spatial inversion $\mathbf{x} \mapsto -\mathbf{x}$, while the ungerade eigenfunctions (bottom row) change sign. Within each parity sector, the nodal structure reflects the dimension of the irrep: $A$-type eigenfunctions ($d_\mu=1$) have the fewest nodal lines, while $H$-type eigenfunctions ($d_\mu=5$) exhibit the most complex patterns. This classification is analogous to the Boson/Fermion decomposition of Schr\"odinger eigenfunctions on the cube demonstrated by Olver~\cite{olver2025}, but extended here to a curved surface with icosahedral symmetry and to the DEC framework.
The analogous classification holds on the torus under $D_{6h}$ (12 classes, $d_\mu \in \{1,1,1,1,2,2\}$ per parity sector); the visualisation is omitted as the qualitative conclusions are identical.

\begin{figure}[H]
    \centering
    \includegraphics[width=\linewidth]{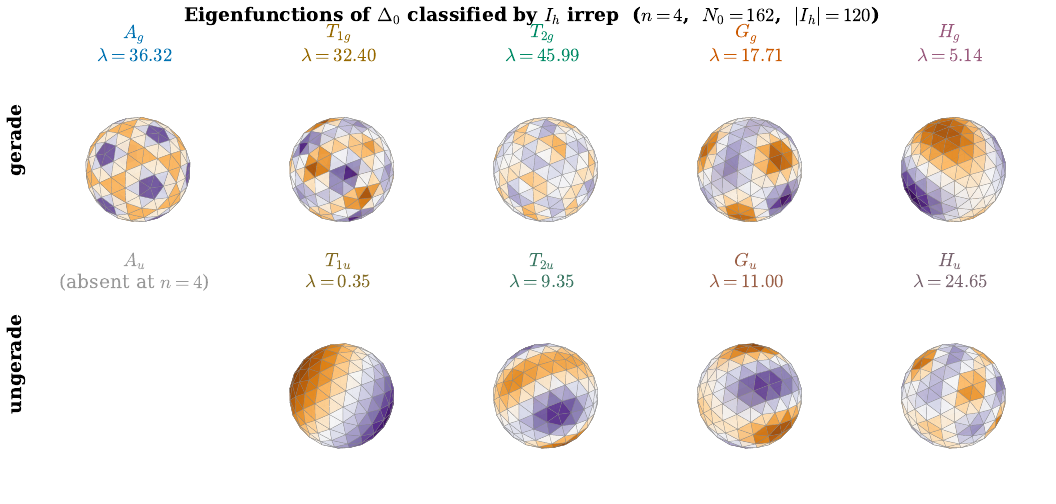}
    \caption{Lowest non-trivial eigenfunction of $\Delta_0$ in each irreducible representation of~$I_h$, rendered on the geodesic icosphere ($n=4$, $N_0=162$). Top row: gerade irreps (invariant under inversion). Bottom row: ungerade irreps (anti-invariant under inversion). The $A_u$ irrep has multiplicity zero at this resolution. Colour scale: purple (positive) to orange (negative).}
    \label{fig:experiment2}
\end{figure}

\paragraph{Spectral fingerprint of the subduction $D_\ell \!\downarrow\! I_h$.}
\label{sec:subduction}

The block classification of Experiment~2 tells us \emph{which} irrep an eigenfunction carries; representation theory predicts more. On the round sphere the eigenspaces of the Laplace--Beltrami operator are the spherical-harmonic degrees $D_\ell$, each an irreducible representation of $\mathrm{SO}(3)$ of dimension $2\ell+1$. Restricting the $\mathrm{SO}(3)$ action to the icosahedral subgroup decomposes $D_\ell$ into icosahedral irreps---the subduction $D_\ell \!\downarrow\! I_h$, equivalently the crystal-field splitting familiar from molecular physics~\cite{dresselhaus2008}. The smallest degree $\ell$ whose subduction first contains a given irrep $\mu$ is therefore the lowest spherical-harmonic degree at which $\mu$ can appear. We test the sharper claim that the \emph{lowest non-trivial} eigenfunction of $\Delta_0$ in each isotypic block concentrates its spectral weight at exactly that predicted degree $\ell_{\mathrm{pred}}$.

For each block we take the full degenerate multiplet at its lowest non-trivial eigenvalue and measure its per-degree content against a real spherical-harmonic design matrix $Y_\ell$ in the mass-weighted discrete inner product $\langle f,g\rangle_M = f^\top M g$, with $M = \star_0$ the dual-area mass matrix. The per-degree power is the trace of the $M$-orthogonal least-squares projection onto $\mathrm{span}\,Y_\ell$, $P(\ell) = \operatorname{tr}\!\big(b_\ell^\top G_\ell^{-1} b_\ell\big)\big/\operatorname{tr}\!\big(\Phi^\top M\,\Phi\big)$ with Gram matrix $G_\ell = Y_\ell^\top M\,Y_\ell$ and $b_\ell = Y_\ell^\top M\,\Phi$; the residual of the joint fit over all degrees serves as a quality gate. We weight by $M$ for two reasons. The icosphere vertices carry unequal dual areas, and the sampled real harmonics are not orthogonal, so the naive estimator $\sum_m|\langle\phi,Y_\ell^m\rangle|^2$ smears power across degrees. Treating the whole multiplet as a subspace (rather than a single eigenvector) keeps $P(\ell)$ basis-independent.

Table~\ref{tab:subduction} reports the measured dominant degree $\ell^\star$ at $N_0 = 362$ ($n=6$) against the subduction prediction. All ten irreps place their dominant degree exactly where $D_\ell \!\downarrow\! I_h$ predicts, the measured multiplet degeneracy equals the irrep dimension in every block, and the dominant-degree power fraction exceeds $0.98$ throughout. Under refinement from $N_0 = 162$ to $N_0 = 362$ the joint-fit residual tightens by an order of magnitude (from $\leq 0.156$ to $\leq 0.015$), confirming that the residual is a discretisation artifact rather than spectral leakage. Figure~\ref{fig:subduction} shows the full per-degree spectra. The pseudoscalar $A_u$ first subduces only at $\ell = 15$ (the lowest odd degree whose subduction contains the totally antisymmetric irrep~\cite{cohan1958}); its lowest-block multiplet is orthogonal to every degree $\ell < 15$ to within a joint-fit residual of $\sim 10^{-12}$, the spectral signature of a representation that no low harmonic can carry.

\begin{table}[h]
\centering
\small
\begin{tabular}{lrrrrr}
\toprule
Irrep & $\lambda$ & mult. & $\ell^\star$ & $P(\ell^\star)$ & $\ell_{\mathrm{pred}}$ \\
\midrule
$A_g$    & 37.788  & 1 & 6  & 0.987 & 6 \\
$T_{1g}$ & 37.971  & 3 & 6  & 0.997 & 6 \\
$T_{2g}$ & 60.124  & 3 & 8  & 0.997 & 8 \\
$G_g$    & 19.093  & 4 & 4  & 0.999 & 4 \\
$H_g$    & 5.809   & 5 & 2  & 0.997 & 2 \\
$A_u$    & 120.257 & 1 & 15 & 1.000 & 15 \\
$T_{1u}$ & 1.866   & 3 & 1  & 0.997 & 1 \\
$T_{2u}$ & 11.410  & 3 & 3  & 0.996 & 3 \\
$G_u$    & 11.701  & 4 & 3  & 0.999 & 3 \\
$H_u$    & 27.736  & 5 & 5  & 0.998 & 5 \\
\bottomrule
\end{tabular}
\caption{Measured spherical-harmonic content of the lowest non-trivial eigenfunction of $\Delta_0$ in each isotypic block on the geodesic icosphere ($n=6$, $N_0 = 362$). For every irrep the dominant degree $\ell^\star$ matches the subduction prediction $\ell_{\mathrm{pred}}$ (smallest $\ell>0$ whose $D_\ell \!\downarrow\! I_h$ contains the irrep), the multiplet degeneracy equals the irrep dimension, and the dominant-degree power fraction $P(\ell^\star)$ exceeds $0.98$. The pseudoscalar $A_u$, first admissible at $\ell=15$, is orthogonal to every lower degree to machine precision.}
\label{tab:subduction}
\end{table}

\begin{figure}[H]
    \centering
    \includegraphics[width=\linewidth]{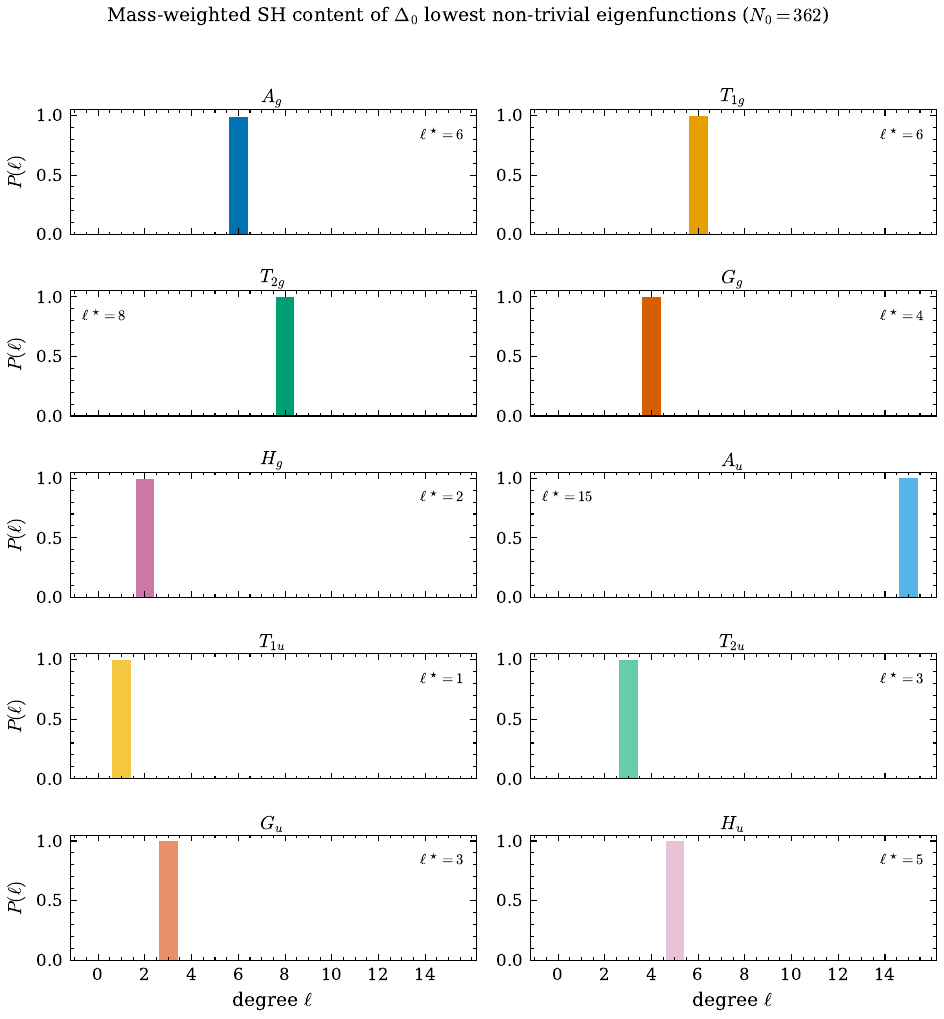}
    \caption{Mass-weighted per-degree power $P(\ell)$ of the lowest non-trivial eigenfunction of $\Delta_0$ in each icosahedral irrep, on the geodesic icosphere ($n=6$, $N_0=362$). Each panel is annotated with the measured dominant degree $\ell^\star$, which coincides with the subduction prediction in all ten cases. The concentration of power at a single degree turns the group-theoretic correlation diagram $D_\ell \!\downarrow\! I_h$ into a directly measurable spectral fingerprint.}
    \label{fig:subduction}
\end{figure}

A sampling limitation bounds the high-degree claims. At $N_0 = 362$ the degree-$\ell$ Gram matrix $G_\ell$ becomes ill-conditioned as the harmonic count $K = (\ell+1)^2$ approaches $N_0$, so $\ell = 15$ (where $K = 256$) is the sampling ceiling. The unit power fraction reported for $A_u$ at $\ell=15$ therefore does not separate $\ell=15$ from $\ell>15$; the well-sampled statement is orthogonality to every degree $\ell < 15$, which is exactly what the subduction predicts for the pseudoscalar.

\subsection{Experiment 3: Computational performance}

The block-diagonal structure produced by the symmetry-adapted basis directly translates into computational savings. Instead of factorising one $N_0 \times N_0$ system, we factorise several independent blocks whose dimensions are at most $\sim N_0 d_{\max}^2 / |G|$, where $d_{\max}$ is the largest irrep dimension. Because dense Cholesky factorisation costs $\tfrac{1}{3}n^3$ floating-point operations (FLOPs), the cubic scaling makes the decomposition increasingly advantageous as~$N_0$ grows.

We solve the regularised Poisson equation $(\Delta_0 + \varepsilon I)\phi = \rho$, with regularisation $\varepsilon = 10^{-6}$ (the same value used in Section~\ref{sec:exp_3complex}) lifting the constant null mode that $\Delta_0$ carries on a closed manifold, and with right-hand side $\rho$ a generic random cochain drawn from the standard normal distribution; the FLOP counts and the solve cost are independent of the particular $\rho$. We compare three strategies:
\begin{itemize}
    \item[(a)] \emph{Direct}: dense symmetric Cholesky solve of the full $N_0 \times N_0$ system.
    \item[(b)] \emph{Sequential blocks}: factorise each diagonal block independently and sum the costs.
    \item[(c)] \emph{Parallel blocks}: the cost equals that of the largest block (ideal parallelisation).
\end{itemize}
To ensure reproducibility, we report the theoretical FLOP count $F = \tfrac{1}{3}n^3 + 2n^2$ (Cholesky factorisation plus two triangular solves) rather than wall-clock time.\footnote{FLOP counts depend only on the block dimensions dictated by the character table, and are therefore deterministic and independent of hardware, compiler, BLAS implementation, and concurrency runtime; wall-clock measurements would introduce variability unrelated to the algebraic reduction studied here.}

All three strategies use dense factorization, of the full operator in strategy~(a) and of each isotypic block in strategies~(b) and~(c), so the speedups below are FLOP reductions relative to a dense factorization. This is the like-for-like comparator, because the symmetry-adapted change of basis $Q$ mixes the sparse $\Delta_0$ into dense isotypic blocks. A practitioner would instead use a sparse direct or iterative solver, for instance multigrid, that exploits the $O(N_0)$ nonzeros of $\Delta_0$; we do not benchmark that path here. As noted in Section~\ref{subsec:complexity}, the block-diagonalization is orthogonal to sparsity and composes with such a solver, so the figures below isolate the symmetry gain in the dense regime rather than claiming an absolute advantage over the fastest available solver.

\paragraph{Icosahedral sphere ($I_h$).}
Table~\ref{tab:performance_sphere} reports the FLOP counts for five subdivision levels ($n = 2, \ldots, 6$; $N_0 = 42, \ldots, 362$). The parallel FLOP speedup grows from $18\times$ at $n=2$ to $62\times$ at $n=6$, reflecting the fact that the largest block (the $H_g$ isotypic component, of leading dimension~$25N_0/120$) grows much more slowly than the full problem size. The number of active blocks reaches 10 at $n=6$ when the $A_u$ irrep first acquires nonzero multiplicity.

\begin{table}[h]
\centering
\small
\begin{tabular}{rrrrrr}
\toprule
$n$ & $N_0$ & blocks & max block & $S_{\mathrm{seq}}$ (FLOPs) & $S_{\parallel}$ (FLOPs) \\
\midrule
2 &  42 &  7 &  15 & 13.6 & 17.9 \\
3 &  92 &  9 &  25 & 26.3 & 42.8 \\
4 & 162 &  9 &  45 & 27.6 & 42.7 \\
5 & 252 &  9 &  65 & 32.4 & 54.6 \\
6 & 362 & 10 &  90 & 34.7 & 62.0 \\
\bottomrule
\end{tabular}
\caption{FLOP-based speedups for $(\Delta_0 + \varepsilon I)\phi = \rho$ on icosahedral meshes ($I_h$, 10 irreps). $S_{\mathrm{seq}}$ = direct FLOPs / sum of block FLOPs; $S_{\parallel}$ = direct FLOPs / max block FLOPs.}
\label{tab:performance_sphere}
\end{table}

\paragraph{Torus ($D_{6h}$).}
We repeat the experiment on the hexagonal-lattice torus with five grid resolutions ($18 \times 6$ through $108 \times 36$; $N_0 = 108, \ldots, 3888$). Although $|D_{6h}| = 24$ is much smaller than $|I_h| = 120$, the group has 12 irreps (all of dimension $\leq 2$), which distributes the cochain space into more numerous and smaller blocks. As shown in Table~\ref{tab:performance_torus}, the parallel FLOP speedup reaches $182\times$ at the largest resolution, higher than on the sphere at comparable sizes. This gain owes to the more uniform block distribution. The largest block (the $E_{2g}$ isotypic component) accounts for only $\sim 18\%$ of the total degrees of freedom, compared to $\sim 25\%$ for the $H_g$ block on the sphere.

\begin{table}[h]
\centering
\small
\begin{tabular}{rrrrrr}
\toprule
Grid & $N_0$ & blocks & max block & $S_{\mathrm{seq}}$ (FLOPs) & $S_{\parallel}$ (FLOPs) \\
\midrule
$18\times 6$   &   108 & 12 &   24 &  31.6 &   77.0 \\
$36\times 12$  &   432 & 12 &   84 &  45.1 &  128.7 \\
$54\times 18$  &   972 & 12 &  180 &  48.8 &  153.3 \\
$72\times 24$  &  1728 & 12 &  312 &  50.3 &  167.3 \\
$108\times 36$ &  3888 & 12 &  684 &  51.4 &  182.3 \\
\bottomrule
\end{tabular}
\caption{FLOP-based speedups for $(\Delta_0 + \varepsilon I)\phi = \rho$ on hexagonal-lattice torus meshes ($D_{6h}$, 12 irreps). The higher speedup compared to the sphere reflects the more uniform distribution of degrees of freedom across 12 irreps, all of dimension $\leq 2$.}
\label{tab:performance_torus}
\end{table}

Figure~\ref{fig:experiment3} presents the FLOP counts and speedup factors for both surfaces. The direct cost grows as $\Theta(N_0^3)$, while the block-wise cost grows much more slowly because the block dimensions scale as $O(N_0/|G|)$ in the leading term. Since each isotypic block has asymptotic dimension $\dim V_\mu^k \sim d_\mu^2 N_0 / |G|$, the sequential speedup converges to $|G|^3/\!\sum_\mu d_\mu^6$ and the parallel speedup to $|G|^3/d_{\max}^6$ as $N_0 \to \infty$, where $d_{\max} = \max_\mu d_\mu$. For $I_h$ ($d_{\max}=5$) these asymptotic limits are $\approx 41$ (sequential) and $\approx 111$ (parallel); for $D_{6h}$ ($d_{\max}=2$) they are $\approx 52$ and $\approx 216$, respectively. The observed speedups at the largest resolutions ($62\times$ for the sphere, $182\times$ for the torus) are approaching these bounds from below, as expected for moderate~$N_0$. The cost of assembling the change-of-basis matrix~$Q_0$ is excluded, as it is amortised over all subsequent operator solves on the same mesh. Reducing each isotypic block to its multiplicity space, as described in Section~\ref{subsec:complexity}, raises the sequential asymptotes further, to $|G|^3/\sum_\rho d_\rho^3 \approx 3541$ for $I_h$ and $\approx 346$ for $D_{6h}$.

\begin{figure}[H]
    \centering
    \includegraphics[width=\linewidth]{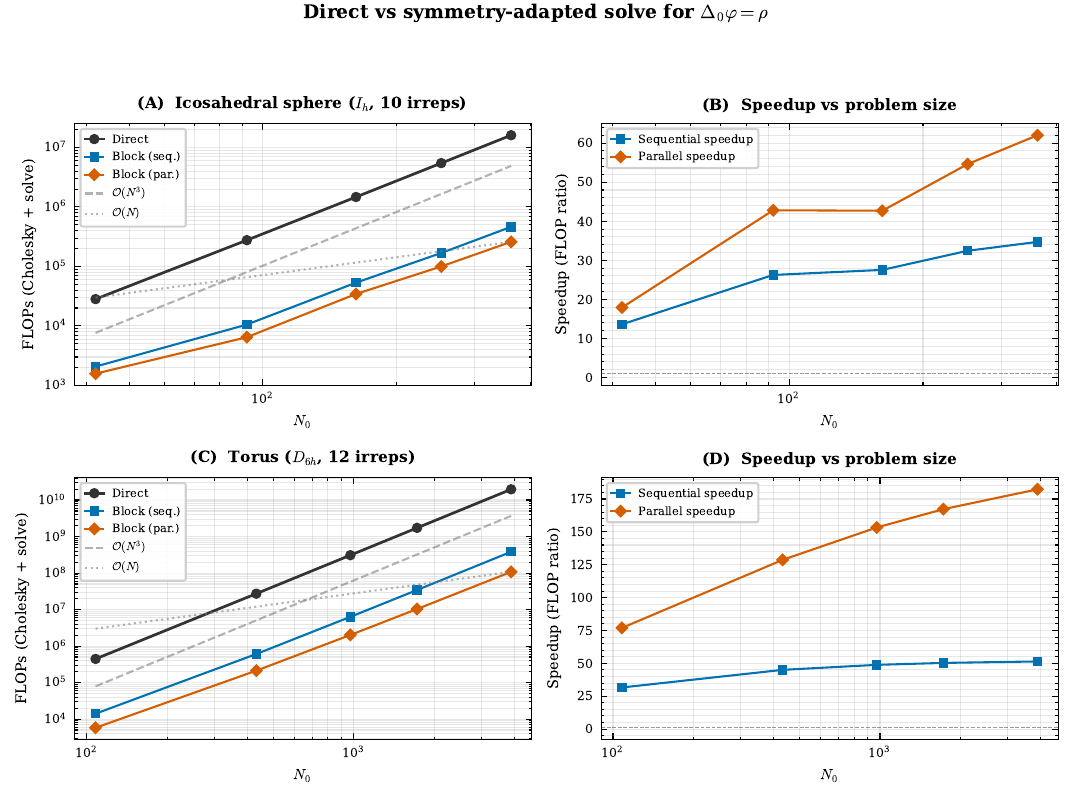}
    \caption{Computational performance of direct vs.\ symmetry-adapted solve for $\Delta_0 \phi = \rho$ on two surfaces. \textbf{Top row:} icosahedral sphere ($I_h$, 10 irreps). \textbf{Bottom row:} torus ($D_{6h}$, 12 irreps). \textbf{(A,\,C)}~FLOP count (log-log) as a function of~$N_0$. \textbf{(B,\,D)}~FLOP-based speedup: sequential (sum of block costs) and parallel (largest block cost). The torus achieves higher speedup than the sphere due to its more numerous irreps of smaller dimension.}
    \label{fig:experiment3}
\end{figure}

To confirm that the FLOP reduction translates into wall-clock gains, we timed the dense Cholesky solve of $(K+\varepsilon I)\phi=\rho$, with $K=\star_0\Delta_0=d_0^\top\star_1 d_0$ the symmetric positive-definite form of the Poisson operator, against its isotypic blocks, using single-threaded BLAS and the median of auto-batched repetitions. The measured parallel speedup reaches about $100\times$ on the torus at the largest resolutions, $13.6\times$ on the sphere at $N_0=362$, and $5.4\times$ on the fan-BCC tessellation of $T^3$ (Section~\ref{sec:exp_3complex}) at $N_0=432$ (Figure~\ref{fig:wallclock}). It stays below the FLOP prediction at every resolution. On the torus it grows and closes on the prediction through $N_0=1728$, where it reaches about $101\times$ against a predicted $167\times$; at the largest resolution $N_0=3888$ it plateaus near $100\times$ while the prediction rises to $182\times$, so the achieved fraction of the FLOP bound drops at this last point rather than continuing to narrow. The FLOP counts of Tables~\ref{tab:performance_sphere}--\ref{tab:performance_torus} therefore remain a reproducible upper bound, approached most closely at intermediate torus sizes. On the smallest meshes the direct solve is already sub-millisecond, where fixed overheads dominate and the sequential-block strategy falls below unit speedup. Timings were obtained on an AMD Ryzen~7 3700U (eight logical cores, 9.7~GB RAM) with OpenBLAS~0.3.31 restricted to one thread, under NumPy~2.4.4 and SciPy~1.17.1.

\begin{figure}[H]
    \centering
    \includegraphics[width=3.5in]{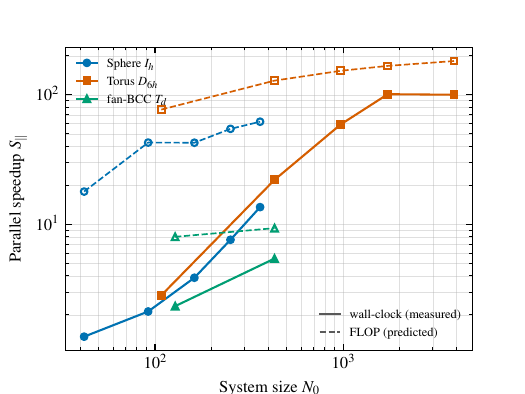}
    \caption{Measured wall-clock parallel speedup versus the predicted FLOP speedup for the block-diagonal solve of $(K+\varepsilon I)\phi=\rho$, as a function of the number of $0$-cochain degrees of freedom $N_0$ (log-log). Solid curves with filled markers are the measured wall-clock speedup $S_\parallel=t_{\mathrm{direct}}/t_{\text{max-block}}$; dashed curves with hollow markers are the FLOP-count prediction. Colour and marker encode the case, with sphere $I_h$ shown as blue circles, torus $D_{6h}$ as vermillion squares, and fan-BCC $T_d$ as green triangles. Solves used single-threaded OpenBLAS. The measured speedup lies below the FLOP prediction at every size; on the torus it grows and narrows the gap through $N_0=1728$, then plateaus near $100\times$ at $N_0=3888$ as the prediction continues to rise.}
    \label{fig:wallclock}
\end{figure}

\subsection{Experiment 4: 3-complex extension under $T_d$ symmetry}
\label{sec:exp_3complex}
We extend the numerical verification to a three-dimensional simplicial complex, confirming Theorems~\ref{thm:d_equiv}, \ref{thm:star_equiv}, and~\ref{thm:universal} in this higher-dimensional setting.

\paragraph{Test problem.}
The domain is the flat 3-torus $T^3 = \mathbb{R}^3/\mathbb{Z}^3$ tessellated by the fan-BCC triangulation. Each unit cube is divided into 12 tetrahedra by the body-centred-cubic rule, producing a purely simplicial 3-complex. The measured Hodge-Laplacian kernel dimensions $\dim\ker\Delta_k = 1,3,3$ for $k=0,1,2$ match the Betti numbers of the $3$-torus at every resolution, and Poincar\'e duality fixes $\beta_3=\beta_0=1$, so the mesh reproduces the full signature $(\beta_0,\beta_1,\beta_2,\beta_3)=(1,3,3,1)$ of $T^3$. The symmetry group is the tetrahedral point group $T_d$ (order~24, five irreps $A_1,A_2,E,T_1,T_2$ with dimensions $1,1,2,3,3$). The BCC lattice itself carries the full octahedral point group $O_h$ of order~48. The diagonal-splitting rule is invariant only under its tetrahedral subgroup $T_d$, because a four-fold rotation of a cube face reverses the chosen diagonal. Because the fan-BCC tessellation is not well-centered, the circumcentric dual of Definition~\ref{def:hodge} can develop negative dual volumes~\cite{vanderzee2010}, so the Hodge stars here use the standard barycentric (centroid) dual, in which $\star\sigma$ is spanned by the barycenters of the cofaces of $\sigma$. The barycenters of any chain of cofaces are affinely independent, so the barycentric dual cells are non-degenerate and the ratios $|\star\sigma|/|\sigma|$ of~\eqref{eq:hodge} are strictly positive for every $\sigma$, independently of well-centeredness. Each $\star_k$ is therefore positive-diagonal and invertible, and $\delta_k,\Delta_k$ are well-defined on the mesh. Positivity is confirmed numerically at every resolution ($\min_\sigma(\star_k)_{\sigma\sigma}>0$ for $k=0,1,2,3$). By Corollary~\ref{cor:dual_agnostic}, the equivariance of Theorem~\ref{thm:star_equiv} extends to this dual, since an isometry sends the barycenter of a coface to that of its image, $g(b_\tau)=b_{g\tau}$, exactly as for circumcenters, and preserves the barycentric cell volumes.

The flat 3-torus with a body-centred-cubic tessellation is the standard periodic cell of a cubic crystal, and the model problem $(\Delta_0+\varepsilon I)\phi=\rho$ can be read as the discrete screened Poisson (Debye--H\"uckel) equation for the electrostatic potential $\phi$ of a periodic charge density $\rho$, with $\varepsilon$ in the role of the inverse squared screening length. This reading fixes the physical interpretation of the model problem; it is not a claim that $\Delta_k$ on the barycentric fan-BCC mesh is a validated electrostatics solver. The barycentric dual trades the primal--dual orthogonality of the circumcentric construction (Remark~\ref{rmk:wellcentered}) for guaranteed positivity, so each $\star_k$ is positive and invertible but is not shown to be a consistent discretization; whether $\Delta_k$ approximates the continuum screened Poisson operator on this non-well-centered mesh is the open convergence question of Section~\ref{sec:discussion}. This section accordingly establishes structure (equivariance, block decomposition, and cost), not approximation accuracy. The block-diagonalization applies directly to periodic electrostatics in a crystal of $T_d$ site symmetry, a setting where symmetry-adapted bases are already standard practice. The value $\varepsilon=10^{-6}$ used here corresponds to weak screening and regularises the constant null mode that $\Delta_0$ carries on a closed manifold. The right-hand side is generic, since the cost reduction depends on the operator and the group action rather than on a particular charge configuration.

The fan-BCC construction is valid only for even $n \geq 4$; three resolutions $n \in \{4,6,8\}$ are used, corresponding to $N_0 = 128, 432, 1024$ vertices. At $n=8$ ($N_0 = 1024$) only the equivariance verification was performed; the dense $12288 \times 12288$ character projector for the $\Delta_2$ block decomposition, together with its eigensolver workspace, exceeds the $9.7$~GB of memory available on the workstation used here, so the full block-decomposition pipeline becomes memory-bound at this resolution. The full pipeline was run at $n=4$ and $n=6$, where $n=6$ already attains the regular-representation asymptote (Figure~\ref{fig:experiment4_speedup}).

\paragraph{Equivariance.}
Theorems~\ref{thm:d_equiv} and~\ref{thm:star_equiv} predict that the equivariance defects $[\rho^{k+1}(g),d_k]$ and $[\rho^{n-k}(g),\star_k]$ vanish exactly for every $g \in T_d$. Table~\ref{tab:equivariance_3d} reports the maximum residual over all 24 group elements at three mesh resolutions, all at machine precision. The composite operator $\Delta_k$ inherits equivariance via Lemma~\ref{lem:delta_equiv}; the slight precision loss for $\Delta_k$ relative to $d_k$ and $\star_k$ reflects accumulated round-off in the matrix products that define it.
\begin{table}[h]
\centering
\small
\begin{tabular}{ccrrr}
\toprule
$N_0$ & $k$ & {$\max_g \|[\rho^{k+1}(g),d_k]\|_\infty$} & {$\max_g \|[\rho^{n-k}(g),\star_k]\|_\infty$} & {$\max_g \|[\rho^k(g),\Delta_k]\|_\infty$} \\
\midrule
128 & 0 & 0.00e+00 & 0.00e+00 & 7.11e-15 \\
128 & 1 & 0.00e+00 & 3.89e-16 & 4.26e-14 \\
128 & 2 & 0.00e+00 & 0.00e+00 & 1.42e-14 \\
432 & 0 & 0.00e+00 & 0.00e+00 & 7.11e-15 \\
432 & 1 & 0.00e+00 & 3.89e-16 & 4.97e-14 \\
432 & 2 & 0.00e+00 & 0.00e+00 & 2.13e-14 \\
1024 & 0 & 0.00e+00 & 0.00e+00 & 1.07e-14 \\
1024 & 1 & 0.00e+00 & 6.66e-16 & 8.53e-14 \\
1024 & 2 & 0.00e+00 & 0.00e+00 & 2.84e-14 \\
\bottomrule
\end{tabular}
\caption{Equivariance residuals for $d_k$, $\star_k$, and $\Delta_k$ on the fan-BCC tessellation of $T^3$ under $T_d$ symmetry at three mesh resolutions ($N_0=128,432,1024$). All commutator norms are at machine precision ($\leq 8.53 \times 10^{-14}$), confirming equivariance across all three Laplacian degrees and all 24 group elements.}
\label{tab:equivariance_3d}
\end{table}

\paragraph{Block decomposition and speedup.}
The character projectors split each Hodge-Laplacian into the five isotypic blocks of $T_d$, whose sizes reproduce the multiplicities predicted by the character table, with the residual outside the blocks at machine precision (Figure~\ref{fig:experiment4_blocks}). Solving each block separately reduces the arithmetic cost of the corresponding linear solve, and the resulting FLOP speedup $\rho_{\mathrm{FLOP}}$ grows with mesh resolution toward the regular-representation asymptote (Table~\ref{tab:speedup_3d}, Figure~\ref{fig:experiment4_speedup}). The wall-clock realisation of the same block-diagonal solve on this mesh is reported alongside the two surface cases in Figure~\ref{fig:wallclock}.

\begin{figure}[h]
    \centering
    \includegraphics[width=0.9\linewidth]{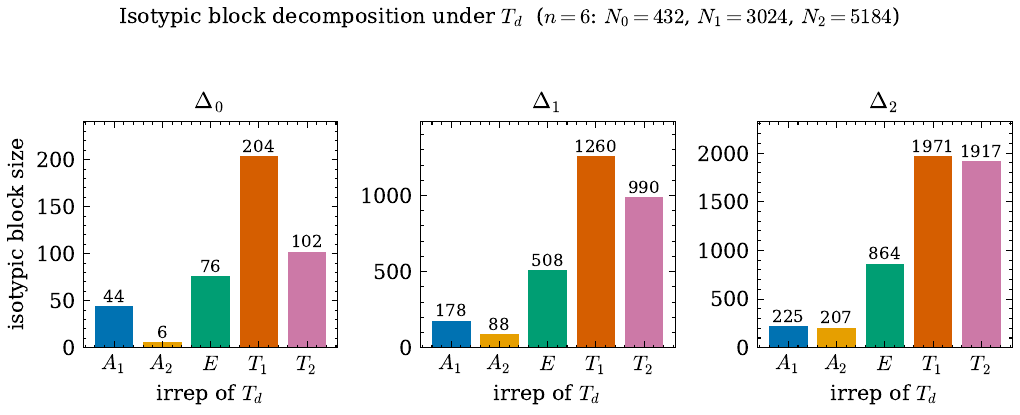}
    \caption{Block sizes of the isotypic decomposition of $\Delta_0$, $\Delta_1$, $\Delta_2$ under $T_d$ on the BCC mesh at resolution $n=6$ ($N_0 = 432$, $N_1 = 3024$, $N_2 = 5184$). For each Laplacian, the cochain space decomposes into five blocks indexed by the irreducible representations $A_1$, $A_2$, $E$, $T_1$, $T_2$ of $T_d$ with respective dimensions 1, 1, 2, 3, 3. Block sizes equal the predicted multiplicities from the character table to machine precision.}
    \label{fig:experiment4_blocks}
\end{figure}

\begin{table}[h]
\centering
\small
\begin{tabular}{ccccccc}
\toprule
$N_0$ & $N_k$ & $k$ & blocks $(A_1,A_2,E,T_1,T_2)$ & {$\rho_{\mathrm{FLOP}}$} & $\rho^{\mathrm{Schur}}_{\mathrm{seq}}$ & $\rho^{\mathrm{Schur}}_{\mathrm{par}}$ \\
\midrule
128 & 128 & 0 & $(19,\,1,\,24,\,63,\,21)$ & 7.5 & 115.3 & 226.4 \\
128 & 896 & 1 & $(62,\,22,\,152,\,390,\,270)$ & 8.7 & 199.0 & 327.4 \\
128 & 1536 & 2 & $(68,\,60,\,256,\,588,\,564)$ & 9.1 & 215.7 & 481.3 \\
432 & 432 & 0 & $(44,\,6,\,76,\,204,\,102)$ & 8.0 & 163.2 & 256.4 \\
432 & 3024 & 1 & $(178,\,88,\,508,\,1260,\,990)$ & 8.9 & 208.3 & 373.2 \\
432 & 5184 & 2 & $(225,\,207,\,864,\,1971,\,1917)$ & 9.1 & 215.9 & 491.2 \\
\bottomrule
\end{tabular}
\caption{FLOP-based speedup ratios for block-diagonalized solve of $(\Delta_k + \varepsilon I)\phi = \rho$ on the fan-BCC $T^3$ mesh under $T_d$ at resolutions $n=4$ ($N_0=128$) and $n=6$ ($N_0=432$). The isotypic block dimensions match character-table predictions; speedup approaches the sequential asymptote $\rho_\infty \approx 9.07$ derived in Figure~\ref{fig:experiment4_speedup}. The two rightmost columns give the Schur-multiplicity refinement (Section~\ref{subsec:complexity}): $\rho^{\mathrm{Schur}}_{\mathrm{seq}}$ factorizes one $c_\mu \times c_\mu$ block per irrep in place of the full isotypic block, with sequential limit $|G|^3/\sum_\rho d_\rho^3 = 216$; $\rho^{\mathrm{Schur}}_{\mathrm{par}}$ is the corresponding parallel ratio, limit $|G|^3/d_{\max}^3 = 512$.}
\label{tab:speedup_3d}
\end{table}

\begin{figure}[h]
    \centering
    \includegraphics[width=0.6\linewidth]{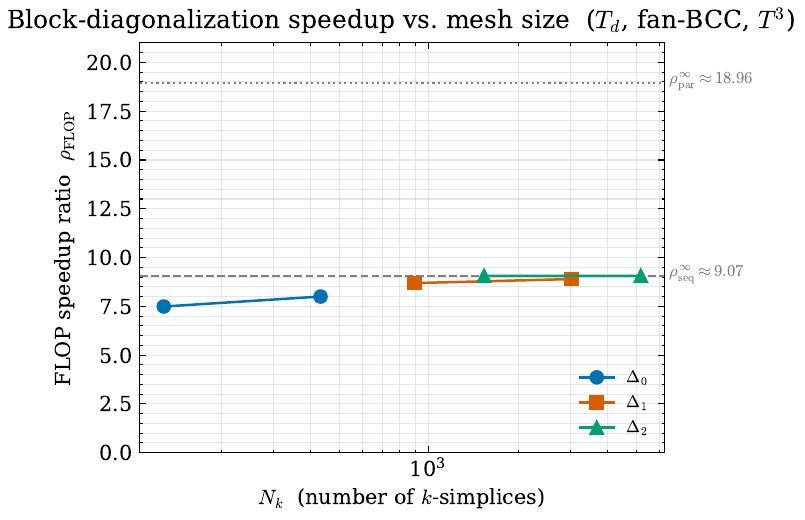}
    \caption{FLOP speedup ratio $\rho_{\text{FLOP}}$ for the block-diagonalized sequential solve of $\Delta_0$, $\Delta_1$, $\Delta_2$ on the fan-BCC tessellation of $T^3$ under $T_d$. The dashed line marks the regular-representation sequential limit $\rho^\infty_{\text{seq}} = |G|^3/\sum_\rho d_\rho^6 \approx 9.07$; the upper bound $\rho^\infty_{\text{par}} = |G|^3/d_{\max}^6 \approx 18.96$ is attainable by solving the isotypic blocks in parallel. Convergence exponents $\alpha$ (fitted via $\rho_\infty - \rho \propto N_k^{-\alpha}$) are annotated in the legend.}
    \label{fig:experiment4_speedup}
\end{figure}

Two features of the speedup data merit interpretation. First, $\Delta_2$ saturates the regular-representation asymptote already at $n=6$ while $\Delta_0$ and $\Delta_1$ approach it more slowly. The asymptote $\rho_\infty = |G|^3 / \sum_\rho d_\rho^6$ is attained when the isotypic block sizes are proportional to $d_\rho^2$; on the BCC mesh, the block multiplicities of $\Delta_2$ are nearly uniform across the higher-dimensional irreps and already approximate this regime, whereas $\Delta_0$ and $\Delta_1$ retain multiplicities skewed toward the one-dimensional irreps $A_1, A_2$, slowing the approach. Second, the gap between the sequential and parallel limits ($\rho_\infty^{\mathrm{seq}} \approx 9.07$ versus $\rho_\infty^{\mathrm{par}} \approx 18.96$) quantifies a further factor-of-two acceleration obtainable by solving the five isotypic blocks concurrently, requiring no algorithmic complexity beyond a five-way task parallelism.

Carried to its natural conclusion, the same construction admits a second reduction inside each isotypic component. By Schur's lemma an equivariant operator restricted to the component of irrep $\mu$ acts as $M_\mu \otimes I_{d_\mu}$, that is, as $d_\mu$ identical copies of a single $c_\mu \times c_\mu$ matrix $M_\mu$ on the multiplicity space, with multiplicity $c_\mu$ the isotypic block size divided by $d_\mu$. Factoring one copy per irrep rather than the whole isotypic block replaces the cost $\sum_\mu (c_\mu d_\mu)^3$ by $\sum_\mu c_\mu^3$. The sequential asymptote rises from the isotypic $|G|^3/\sum_\mu d_\mu^6 \approx 9.07$ to $|G|^3/\sum_\mu d_\mu^3 = 216$, a further factor of about $|G|$, and the parallel limit to $|G|^3/d_{\max}^3 = 512$. As with the isotypic curve, $\Delta_2$ saturates this refined asymptote already at $n=6$, reaching $215.9$, and Table~\ref{tab:speedup_3d} and Figure~\ref{fig:experiment4_speedup} now carry both tiers. Built explicitly from the partner (transfer) projector, $M_\mu$ recovers every eigenvalue of the full operator with $d_\mu$-fold multiplicity to machine precision (worst case $1.3\times 10^{-12}$), so $d_\mu-1$ of every $d_\mu$ blocks are redundant rather than merely counted away. In a linear solve the $d_\mu$ copies share one factorization and need $d_\mu$ back-substitutions, an $\mathcal{O}(d_\mu c_\mu^2)$ sub-leading term that leaves the leading order and asymptote unchanged.

\section{Discussion}
\label{sec:discussion}

The experiments confirm that the equivariance proved in Theorems~\ref{thm:d_equiv}--\ref{thm:star_equiv} propagates automatically to every operator assembled from $d$ and $\star$, including the Hodge Laplacians, and that the resulting block-diagonal systems approach the asymptotic bounds derived in Section~\ref{subsec:complexity}. The framework occupies a complementary position to existing symmetry-exploitation techniques in numerical PDEs, as the following comparisons illustrate.

DEC sits closest to spectral block-diagonalization methods. Olver's spectral approach \cite{olver2025} achieves block-diagonalization on flat Platonic domains (squares, cubes) using global polynomial bases, with the advantage of spectral convergence. Our DEC approach trades spectral convergence for geometric generality, applying to any simplicial mesh that faithfully represents the symmetry group, including meshes on curved manifolds where global polynomial bases are unavailable. A direct numerical comparison between the two frameworks is not well-defined because the underlying function spaces differ. Olver's construction operates on $L^2$ of polynomial spaces over flat polytopes, while ours operates on cochain spaces $\Omega_d^k(K)$ over simplicial complexes on curved manifolds. The two approaches address disjoint portions of the space of PDE problems, and their computational gains are not commensurable.

Finite Element Exterior Calculus offers a second point of comparison. The FEEC framework \cite{arnold2006,arnold2010} provides a Hilbert-complex approach to discretizing the de~Rham complex. While FEEC is more general than DEC in terms of approximation order, extending equivariance to higher-order FEEC discretizations requires explicitly classifying and constructing symmetry-adapted bases for complex spaces of polynomial differential forms. As recently detailed by Berchenko-Kogan \cite{berchenko2024} and Licht \cite{licht2024}, this task is algebraically intensive and highly dependent on the specific polynomial degree and element type. In contrast, DEC has the advantage of geometric simplicity, since the operators $d$ and $\star$ are defined directly on topological cochains without weak formulations, projection operators, or spatial polynomial algebras. This simplicity makes the equivariance analysis universal; a single, purely combinatorial symmetry-adapted basis computation completely block-diagonalizes the system, avoiding the degree-dependent algebraic complexity inherent to higher-order FEEC.

The construction also constrains mesh design through the invertibility of the Hodge star. The compositional pathway $\delta_k = \star_{k-1}^{-1} d_{k-1}^\top \star_k$ requires $\star_1$ to be everywhere invertible. The cotangent weights $\star_1(e) = (\cot\alpha_e + \cot\beta_e)/2$ vanish when the two angles opposite an interior edge sum to exactly $\pi$ \cite{bobenko2006,wardetzky2007}, rendering $\star_1$ singular and the operator composition unstable (see Section~\ref{sec:experiments} for the quad-based torus case). This positivity condition ties mesh design to the intrinsic Delaunay property \cite[Prop.~17]{bobenko2006}. The local Delaunay criterion \cite[Lem.~9]{bobenko2006} requires that the sum of opposite angles across any interior edge not exceed $\pi$, a condition weaker than global acute-angle constraints that permits obtuse triangles. Positivity of $\star_1$, however, needs the \emph{strict} inequality $\alpha_e + \beta_e < \pi$. The degenerate, cocircular case of equality is Delaunay yet yields $\star_1(e) = 0$, so invertibility of $\star_1$ is guaranteed by the strictly Delaunay property, and this degenerate case must be excluded explicitly on structured grids designed for symmetry preservation.

This study concerns the cost and structure of the symmetry reduction rather than the order of accuracy of the discretization. The symmetry-adapted solve is algebraically exact relative to the direct solve, so the solution inherits the accuracy of the underlying DEC discretization, whose convergence for Hodge--Laplace problems is established in~\cite{mohamed2018,schulz2018}.

Symmetry has a second consequence beyond cost. On well-centered meshes over contractible domains, the DEC Hodge--Laplace approximation superconverges on certain symmetric meshes~\cite{guzman2025}. Whether that gain survives on the closed manifolds and non-well-centered tessellations used here, whose harmonic forms are non-trivial, remains open.

The scope of the block-diagonalization extends well beyond the linear, isotropic model problems of the experiments. As clarified in Remarks~\ref{rmk:anisotropic_data} and~\ref{rmk:nonlinear}, it is a property of the operator and therefore persists when the boundary data and forcing are fully anisotropic, and it extends to the linear substeps of operator-splitting schemes for nonlinear equations such as Navier--Stokes.

Several limitations remain. The framework requires the mesh to exactly realize the symmetry group $G$; for meshes with only approximate symmetry, the orbit-averaging technique of Remark~\ref{rmk:averaging} restores equivariance at the cost of a one-time geometric preprocessing step, while a more systematic treatment via approximate representation theory or perturbation bounds on the block structure remains an open direction. In three or more dimensions, positivity of the circumcentric Hodge star is a substantive geometric constraint not implied by Delaunay alone~\cite{vanderzee2010,hirani2013}; Section~\ref{sec:exp_3complex} sidesteps this by adopting a centroid-based Hodge star, but extending the framework to general non-well-centered tetrahedralizations requires further analysis of the dual-cell volume formula. Finally, assembling the isotypic projections costs $\mathcal{O}(|G|\cdot N_k^2)$; although dominated by the $\mathcal{O}(N_k^3)$ solve for large $N_k$, this may become a bottleneck for very large meshes without sparse-matrix acceleration.

\section{Conclusion}
\label{sec:conclusion}

We have proved that the discrete exterior derivative $d$ and the discrete Hodge star $\star$ are equivariant under isometric finite group actions on simplicial complexes of arbitrary topological dimension. As a direct consequence, any operator assembled from $d$ and $\star$ automatically inherits a block-diagonal structure when expressed in a symmetry-adapted basis, without requiring a separate equivariance analysis for each PDE. A single symmetry analysis of the mesh simultaneously decouples all DEC-based operators defined on it.

Three properties of the framework distinguish it from existing symmetry-exploitation strategies. First, it applies natively to curved manifolds where global spectral bases are unavailable, such as geodesic spheres and tori, as well as to three-dimensional tessellations. Second, the symmetry analysis is purely combinatorial. The symmetry-adapted basis is computed once from the group action on the mesh, independently of the specific PDE coefficients or boundary data. Third, the block structure provides insight beyond the speedup. Eigenvalues of DEC operators are automatically classified by irreducible representation, giving each eigenfunction a definite symmetry label (Experiment~2) and connecting the discrete computation to the spectral theory of the symmetry group.

A practical consequence of the dimension-agnostic proof of Theorem~\ref{thm:d_equiv} is that the framework extends transparently to three-dimensional simplicial complexes, opening symmetry-based DEC parallelization to crystallographic and molecular-point-group settings. The Hodge-star equivariance (Theorem~\ref{thm:star_equiv}) holds under the same isometry hypothesis. The additional structural condition in three dimensions concerns the invertibility of $\star$ rather than its equivariance. Positivity of the circumcentric star is automatic on Delaunay surfaces but not on tetrahedral meshes~\cite{vanderzee2010,hirani2013}. The fan-BCC mesh of Section~\ref{sec:exp_3complex} demonstrates that this requirement can be circumvented in practice by replacing the circumcentric Hodge with a centroid-based variant, for which the equivariance argument is unchanged.

The algebraic reduction developed here is complementary to existing domain-decomposition implementations of DEC~\cite{boom2022}, and the composition of the two levels of parallelism, block-diagonalization by symmetry followed by MPI distribution within each sufficiently large block, is a natural engineering follow-up. Exploiting the full lattice symmetry at the block-diagonalization layer requires a symmetry-preserving mesh subdivision, which the default 5-/6-tetrahedron brick pattern of~\cite{boom2022} is not, by the same diagonal-versus-symmetry argument applied to the fan-BCC rule in Section~\ref{sec:exp_3complex}. The symmetry level itself admits a further algebraic reduction. Because Schur's lemma makes each isotypic block a tensor product $M_\mu \otimes I_{d_\mu}$, only one block per irreducible representation carries distinct spectral information, which deepens the sequential speedup and shrinks the largest block passed to the distributed layer (Section~\ref{sec:exp_3complex}). Extending this unified equivariance framework to non-compact groups, adaptive meshes, and higher-order DEC discretizations~\cite{schulz2018} remains an open direction for future research.

\section*{Code and data availability}
The code, recorded results, and figures that reproduce the numerical experiments reported here are openly available under the MIT license at \url{https://github.com/ldsufrpe/dec-equivariance}. The repository documents which script produces each figure and table in the paper.

\bibliographystyle{unsrtnat}
\bibliography{references}

\end{document}

%% file: figures/icosahedron_mesh.tex

\coordinate (IcoV1)  at ( 0,      1,      1.61803);
\coordinate (IcoV2)  at ( 0,      1,     -1.61803);
\coordinate (IcoV3)  at ( 0,     -1,      1.61803);
\coordinate (IcoV4)  at ( 0,     -1,     -1.61803);
\coordinate (IcoV5)  at ( 1,      1.61803, 0);
\coordinate (IcoV6)  at ( 1,     -1.61803, 0);
\coordinate (IcoV7)  at (-1,      1.61803, 0);
\coordinate (IcoV8)  at (-1,     -1.61803, 0);
\coordinate (IcoV9)  at ( 1.61803, 0,      1);
\coordinate (IcoV10) at ( 1.61803, 0,     -1);
\coordinate (IcoV11) at (-1.61803, 0,      1);
\coordinate (IcoV12) at (-1.61803, 0,     -1);

\tikzset{hiddenface/.style={fill=gray!8, draw=gray!35, thin,
         dash pattern=on 2pt off 1.5pt}}
\fill[hiddenface] (IcoV4)--(IcoV10)--(IcoV2)--cycle;   
\fill[hiddenface] (IcoV4)--(IcoV2)--(IcoV12)--cycle;   
\fill[hiddenface] (IcoV4)--(IcoV12)--(IcoV8)--cycle;   
\fill[hiddenface] (IcoV4)--(IcoV8)--(IcoV6)--cycle;    
\fill[hiddenface] (IcoV4)--(IcoV6)--(IcoV10)--cycle;   
\fill[hiddenface] (IcoV11)--(IcoV12)--(IcoV8)--cycle;  
\fill[hiddenface] (IcoV11)--(IcoV8)--(IcoV3)--cycle;   
\fill[hiddenface] (IcoV3)--(IcoV8)--(IcoV6)--cycle;    
\fill[hiddenface] (IcoV3)--(IcoV6)--(IcoV9)--cycle;    
\fill[hiddenface] (IcoV9)--(IcoV6)--(IcoV10)--cycle;   

\tikzset{visface/.style={fill=gray!28, draw=gray!65, line width=0.55pt}}
\fill[visface] (IcoV5)--(IcoV9)--(IcoV10)--cycle;    
\fill[visface] (IcoV5)--(IcoV10)--(IcoV2)--cycle;    
\fill[visface] (IcoV1)--(IcoV3)--(IcoV9)--cycle;     
\fill[visface] (IcoV1)--(IcoV11)--(IcoV3)--cycle;    
\fill[visface] (IcoV7)--(IcoV2)--(IcoV12)--cycle;    
\fill[visface] (IcoV1)--(IcoV9)--(IcoV5)--cycle;     
\fill[visface] (IcoV7)--(IcoV12)--(IcoV11)--cycle;   
\fill[visface] (IcoV5)--(IcoV2)--(IcoV7)--cycle;     
\fill[visface, fill=gray!38] (IcoV1)--(IcoV7)--(IcoV11)--cycle; 
\fill[visface, fill=gray!45] (IcoV1)--(IcoV5)--(IcoV7)--cycle;  

\foreach \v in {IcoV1,IcoV2,IcoV3,IcoV5,IcoV7,IcoV9,IcoV10,IcoV11,IcoV12}{
    \fill[gray!80] (\v) circle (1.8pt);
}